\newtheorem{thm}{Theorem}[section]
\newtheorem{lem}[thm]{Lemma}
\newtheorem{prop}[thm]{Proposition}
\newtheorem{cor}[thm]{Corollary}
\theoremstyle{definition}
\newtheorem{ex}[thm]{\it Example}
\newtheorem{rem}[thm]{\it Remark}
\numberwithin{equation}{section}
\title{One half of almost symmetric numerical semigroups}
\date{}
\begin{document}
\newcommand{\Hi}{\mathrm {Hilb}}
\newcommand{\card}{\mathrm {card}}
\newcommand{\ap}{\mathrm {Ap}}
\newcommand{\ord}{\mathrm {ord}}
\newcommand{\mapM}{\mathrm {maxAp_M}}
\newcommand{\map}{\mathrm {maxAp}}
\newcommand{\gr}{\mathrm{gr}}
\newcommand{\al}{\boldsymbol{\alpha}}
\newcommand{\be}{\boldsymbol{\beta}}
\newcommand{\ga}{\boldsymbol{\gamma}}
\newcommand{\0}{\boldsymbol 0}
\newcommand{\de}{\Delta^S}
\newcommand{\du}{S \! \Join^b \! E}
\author{
F. Strazzanti \thanks{{\em e-mail}: strazzanti@mail.dm.unipi.it}}

\affil{Dipartimento di Matematica, Universit\`{a} degli Studi di Pisa,
Largo Bruno Pontecorvo 5, 56127 Pisa, Italy}

\maketitle

\begin{abstract}

Let $S,T$ be two numerical semigroups. We study when $S$ is one
half of $T$, with $T$ almost symmetric. If we assume that the type of $T$,
$t(T)$, is odd, then for any $S$ there exist infinitely many
such $T$ and we prove that $1 \leq t(T) \leq 2t(S)+1$. On the
other hand, if $t(T)$ is even, there exists such $T$ if and only
if $S$ is almost symmetric and different from $\mathbb{N}$; in
this case the type of $S$ is the number of even pseudo-Frobenius
numbers of $T$. Moreover, we construct these families of
semigroups using the numerical duplication with respect to a
relative ideal.

\medskip

\noindent MSC: 20M14; 13H10.

\medskip
{\bf Keywords} Numerical semigroup $\cdot$ One half of a semigroup $\cdot$ 
\\ Almost symmetric semigroup $\cdot$ Numerical duplication
$\cdot$ type.
\end{abstract}

\section{Introduction}

A numerical semigroup $S$ is a submonoid of $\mathbb{N}$ such that
$\mathbb{N} \setminus S$ is finite. Numerical semigroups arise in
several contexts, such as commutative algebra, algebraic geometry,
coding theory, number theory and combinatorics. Many classes of
numerical semigroups are defined translating ring concepts (see
e.g. \cite{BDF}); for example symmetric and pseudo-symmetric
numerical semigroups are the corresponding concepts of Gorenstein
and Kunz rings in numerical semigroup theory. Almost symmetric
numerical semigroups, that are the object of this paper, were
introduced in \cite{BF}, together with the corresponding notion of
almost Gorenstein rings, as generalization of symmetric and
pseudo-symmetric numerical semigroups; in fact these classes are
exactly the almost symmetric numerical semigroups of type $1$ and
$2$, respectively.

In \cite{RGSGU} Rosales, Garc\'ia-S\'anchez, Garc\'ia-Garc\'ia,
and Urbano-Blanco introduced the concept of one half of a
numerical semigroup in order to solve proportionally modular
diophantine inequalities; $S$ is one half of $T$ if $S=\{s \in
\mathbb{N} | \ 2s \in T \}$. In the last ten years several authors
have studied this concept and its generalizations, see for example
\cite{Do, MO, M, Sm} and the papers quoted below.

Rosales and Garc\'ia-S\'anchez proved in \cite{RGS2} that every
numerical semigroup is one half of infinitely many symmetric
semigroups and Swanson generalized this result in \cite{Sw}.
Moreover Rosales proved in \cite{R} that a numerical semigroup
(different from $\mathbb{N}$) is one half of a pseudo-symmetric
semigroup if and only if is symmetric or pseudo-symmetric.

In this paper we generalize these results to the case of almost
symmetric semigroups. According to results of \cite{R},
\cite{RGS}, and \cite{RGS2} we consider separately the cases of
almost symmetric semigroups with even and odd type.

Starting with a numerical semigroup $S$ with type $t$, in
\cite{DS} are constructed infinitely many almost symmetric
semigroups with type $1,3,5,7, \dots ,2t+1$, such that $S$ is
their half. This construction is called numerical duplication with
respect to a proper ideal of $S$ and arises in commutative
algebra, in fact it is the value semigroup of particular algebroid
branches (see \cite[Theorems 3.4 and 3.6]{BDS}). In this paper we
prove that if $S$ is one half of almost symmetric semigroup $T$
with odd type, then the type of $T$ is included in the values
above and all such semigroups can be constructed with the
numerical duplication with respect to a \textit{relative} ideal.

On the other hand if $T$ is almost symmetric with even type, $S$
is almost symmetric and its type is the number of even
pseudo-Frobenius numbers of $T$; in particular $t(S) \leq t(T)$.
Moreover we prove that a numerical semigroup different from
$\mathbb{N}$ is almost symmetric if and only if it is one half of
an almost symmetric semigroup with even type or equivalently of a
finite number of almost symmetric semigroups with even type.
Finally, we characterize these semigroups.

\vspace{1em}

The paper is organized as follows. In the Section $2$ we recall
some definitions and results about numerical semigroups and prove
some useful lemmas. In Section $3$ we introduce the numerical
duplication, prove that every numerical semigroups can be realized
as numerical duplication with respect to a relative ideal (see
Proposition \ref{duplication}), and we use this fact in Theorem
\ref{main odd} to characterize one half of almost symmetric
numerical semigroup $T$ with odd type; moreover, in Theorem
\ref{odd} we give bounds for the type of $T$ (see also the
discussion after the theorem). Finally, in the last section we
characterize when $T$ is almost symmetric with even type in terms
of properties of $\frac{T}{2}$ (see Theorem \ref{main even}) and
prove in Corollary \ref{final} that a numerical semigroup $S \neq
\mathbb{N}$ is almost symmetric if and only if it is one half an
almost symmetric numerical semigroup with even type.

\section{Preliminaries}

Let $S$ be a numerical semigroup. The maximum of $\mathbb{N}
\setminus S$ is called \textit{Frobenius number} of $S$ and we
denote it by $f(S)$. Clearly if $s \in S \setminus \{0\}$ then
$s+f(S) \in S$; more generally we define the set of
\textit{pseudo-Frobenius numbers} PF$(S)=\{x \in \mathbb{Z}
\setminus S | \ x+s \in S \text{ \ for any \ } s \in S \setminus
\{0\} \}$. The cardinality of PF$(S)$ is called the \textit{type}
of $S$; this name is due to ring theory (see e.g. the first
section of \cite[Chapter II]{BDF}).

Let $s$ be an integer such that $s \notin S$. If $f(S)-s \in S, s$
is called a gap of first type, otherwise a gap of second type. We
denote the set of gaps of second type with ${\rm L}(S)$; it is
easy to see that ${\rm PF}(S) \subseteq {\rm L}(S) \cup \{f(S)\}$.

If we have that $s \in S$ if and only if $f(S)-s \notin S$, then
we say that $S$ is \textit{symmetric}; if $f(S)$ is even and this
property holds for any $s \in \mathbb{Z}$ but $f(S)/2$, we call
$S$ \textit{pseudo-symmetric}. Clearly these properties mean that
${\rm L}(S)=\emptyset$ and ${\rm L}(S)=\{\frac{f(S)}{2}\}$ respectively. 
Finally if ${\rm L}(S) \subseteq {\rm PF}(S)$, 
we call $S$ \textit{almost symmetric}. It is well
known that $S$ is symmetric if and only if has type $1$, while the
pseudo-symmetric numerical semigroups have type $2$ (but there are
numerical semigroups with type $2$ that are not pseudo-symmetric).
Almost symmetric semigroups generalize these two classes, in
particular symmetric and pseudo-symmetric numerical semigroups are
exactly almost symmetric semigroups with type $1$ and $2$
respectively (see \cite[Proposition 7]{BF}).

A \textit{relative ideal} of $S$ is a set $E \subseteq \mathbb{Z}$
such that $E+S \subseteq E$ and $x+E \subseteq S$ for some $x \in
S$; moreover if $E \subseteq S$, we say simply that $E$ is a
\textit{(proper) ideal} of $S$. We denote with $f(E)$ the Frobenius number
of $E$, i.e. the maximum of $\mathbb{Z} \setminus E$. For example
$M(S)=S \setminus \{0\}$ and $K(S)=\{x \in \mathbb{Z}| \ f(S)-x
\notin S\}$ are relative ideals of $S$ (the first one is a proper
ideal) and are called \textit{maximal ideal} and \textit{standard
canonical ideal}, respectively. More generally we say that $E$ is
a \textit{canonical ideal} of $S$ if $E=K(S)+x$ for some $x \in
\mathbb{Z}$. The names of these two ideals come from ring theory 
and they are very important, for example it
is known that $S$ is symmetric if and only if $S=K(S)$ and it is
almost symmetric if and only if $M(S)+K(S) \subseteq M(S)$ (see
\cite[Proposition 4]{BF}). We note the analogy with ring theory,
where a Cohen Macaulay local ring $(R, \mathfrak{m})$ is Gorenstein
if and only if is isomorphic to its canonical module $K$, and is 
almost Gorenstein when $\mathfrak{m}+K \subseteq \mathfrak{m}$,
where $R \subseteq K \subseteq \overline{R}$.

If $E$ and $F$ are relative ideals of $S$, we define $E-F=\{x \in
\mathbb{Z}| \ x+f \in E \text{ \ for any \ } f \in F\}$, that is
also a relative ideal. For example we can define $M(S)-M(S)$ in
this way and it is easy to check that this is a numerical semigroup
satisfying the equality $M(S)-M(S)=S \cup \text{PF}(S)$.

There exist other characterizations of almost symmetric numerical
semigroups; in the following sections we will use the next two.

\begin{lem} \label{almost symmetric}
Let $S$ be a numerical semigroup with Frobenius number $f$. Then
$S$ is almost symmetric if and only if the following property
holds,
$$
s \in S \Longleftrightarrow f-s \notin S \cup {\rm PF}(S)
$$
for any $s \in \mathbb{Z} \setminus \{0\}$.
\end{lem}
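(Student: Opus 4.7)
The plan is to reformulate the almost symmetry condition as the equality $L(S) \cup \{f\} = \mathrm{PF}(S)$. The inclusion $\mathrm{PF}(S) \subseteq L(S) \cup \{f\}$ and the fact $f \in \mathrm{PF}(S)$ are already noted in the preliminaries, so this equality is equivalent to the defining condition $L(S) \subseteq \mathrm{PF}(S)$. I would then verify the bicondition by splitting on whether $s$ lies in $S$, and handle the few edge cases directly.

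For the forward direction, assume $S$ is almost symmetric and pick $s \in \mathbb{Z} \setminus \{0\}$. The implication $s \in S \Rightarrow f - s \notin S \cup \mathrm{PF}(S)$ holds for every numerical semigroup, needing no hypothesis: if $f - s \in S$ then $f = s + (f - s) \in S$, and if $f - s \in \mathrm{PF}(S)$ then testing the PF-condition against the element $s \in S \setminus \{0\}$ again forces $f \in S$. For the converse implication $s \notin S \Rightarrow f - s \in S \cup \mathrm{PF}(S)$, I dispatch the trivial cases $s < 0$ (so $f - s > f$ lies in $S$) and $s = f$ (so $f - s = 0 \in S$), reducing to $s$ a gap of $S$. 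If $s$ is of first type there is nothing to do; if $s \in L(S)$, then $f - s$ is also positive, not in $S$, and $f - (f - s) = s \notin S$, so $f - s \in L(S)$ too, and almost symmetry yields $f - s \in \mathrm{PF}(S)$.

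For the reverse direction I will use the bicondition twice. Given $s \in L(S)$ I want $s \in \mathrm{PF}(S)$. Applying the hypothesis to $s$: from $s \notin S$ I get $f - s \in S \cup \mathrm{PF}(S)$, and since $f - s \notin S$ by definition of $L(S)$, necessarily $f - s \in \mathrm{PF}(S)$. Applying the hypothesis now to $f - s$ (nonzero, as $s \neq f$): from $f - s \notin S$ I deduce $s = f - (f - s) \in S \cup \mathrm{PF}(S)$, and $s \notin S$ then forces $s \in \mathrm{PF}(S)$, as required. There is no substantive obstacle in this proof; the only care needed is in the edge cases ($s < 0$, $s = f$, $f - s = 0$) and in remembering to double-apply the hypothesis (once to $s$ and once to $f - s$) in the converse direction.
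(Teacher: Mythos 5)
Your proof is correct and follows essentially the same route as the paper's: the implication for $s \in S$ holds unconditionally, and the case $s \notin S$ reduces to ${\rm L}(S) \subseteq {\rm PF}(S)$ via the symmetry $s \mapsto f-s$ on gaps of second type. You are merely more explicit about the edge cases and about unpacking the equivalence into two implications (your first application of the hypothesis in the reverse direction is redundant, since applying it to $f-s$ alone already yields $s \in {\rm PF}(S)$), but there is no substantive difference.
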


\begin{proof}
First of all let $s$ be an element of $S$. One has $f-s \notin S
\cup \text{PF}(S)$ or otherwise $f=s+(f-s) \in S$, a
contradiction. This is always true.

Now let $s \notin S$. The condition above is equivalent to $f-s
\in S \cup \text{PF}(S)$. Clearly, the set of elements such that
$f-s \notin S$ for some $s \notin S$ is L$(S)$, then this
condition is equivalent to L$(S) \subseteq \text{PF}(S)$, that is $S$
is almost symmetric.
\end{proof}

The next theorem was proved by Nari in \cite{N}, Theorem $2.4$.

\begin{thm} \label{Nari}
Let $S$ be a numerical semigroup and {\rm PF}$(S)=\{ f_1 < \dots < f_{t-1} < f \}$,
then the following conditions are equivalent: \\
{\rm (1)} $S$ is almost symmetric; \\
{\rm (2)} $f_i+f_{t-i}=f$ for any $i \in \{1, \dots,t-1\}$.
\end{thm}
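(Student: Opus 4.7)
The plan is to prove each implication separately, leveraging Lemma \ref{almost symmetric} for $(1) \Rightarrow (2)$ and the very definition ${\rm L}(S) \subseteq {\rm PF}(S)$ for $(2) \Rightarrow (1)$, together with the inclusion ${\rm PF}(S) \subseteq {\rm L}(S) \cup \{f\}$ already recorded in the preliminaries.

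For $(1) \Rightarrow (2)$, I would fix $i \in \{1,\dots,t-1\}$. Since $f_i \notin S$ and $f_i \neq 0$, Lemma \ref{almost symmetric} gives $f - f_i \in S \cup {\rm PF}(S)$. The case $f - f_i \in S$ is excluded because $f_i \in {\rm PF}(S)$ and $f - f_i \in S \setminus \{0\}$ would force $f = f_i + (f-f_i) \in S$. Hence $f - f_i \in {\rm PF}(S)$, and $f - f_i < f$ since $f_i > 0$, which produces a map $\sigma\colon \{1,\dots,t-1\} \to \{1,\dots,t-1\}$ defined by $f - f_i = f_{\sigma(i)}$. Directly from its definition $\sigma$ is an involution, and it is strictly order-reversing, so $\sigma(i) = t-i$, which is exactly (2).

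For $(2) \Rightarrow (1)$, by definition it is enough to show ${\rm L}(S) \subseteq {\rm PF}(S)$. The heart of the argument is the following closure property: if $l \in {\rm L}(S)$ and $s \in S \setminus \{0\}$ satisfy $l + s \notin S$, then $l + s \in {\rm L}(S)$. Indeed, otherwise $f - (l+s) \in S$ and then $f - l = (f - l - s) + s \in S$, contradicting $l \in {\rm L}(S)$. Consider the partial order $x \leq_S y \iff y - x \in S$; the contrapositive of this closure property shows that any $\leq_S$-maximal element of the finite set ${\rm L}(S)$ must lie in ${\rm PF}(S)$. Given $l \in {\rm L}(S)$, choose a $\leq_S$-maximal $l' \in {\rm L}(S)$ with $l \leq_S l'$; since $f \notin {\rm L}(S)$, necessarily $l' = f_j$ for some $j < t$, and hypothesis (2) yields $f - l' \in {\rm PF}(S)$. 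If $l \neq l'$ then $l' - l \in S \setminus \{0\}$ and the pseudo-Frobenius property of $f - l'$ forces $f - l = (f - l') + (l' - l) \in S$, contradicting $l \in {\rm L}(S)$. Therefore $l = l' \in {\rm PF}(S)$.

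I expect $(1) \Rightarrow (2)$ to be essentially mechanical once Lemma \ref{almost symmetric} is available, and the real content to reside in $(2) \Rightarrow (1)$: the main obstacle will be isolating the correct closure property of ${\rm L}(S)$ and recognizing that a single "climb to a $\leq_S$-maximal element of ${\rm L}(S)$" combined with the symmetry provided by (2) is enough to pin any starting $l$ at that maximum and so inside ${\rm PF}(S)$.
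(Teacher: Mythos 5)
The paper does not actually prove this theorem: it is quoted from Nari \cite{N}, so there is no internal argument to compare yours against. Your proof is correct and self-contained, using only the Section 2 preliminaries. For $(1)\Rightarrow(2)$, Lemma \ref{almost symmetric} together with the exclusion of $f-f_i\in S$ (and the tacit, but immediate, facts that $f_i\neq 0$ because $0\in S$, and that ${\rm PF}(S)\setminus\{f\}=\{f_1,\dots,f_{t-1}\}$) makes $x\mapsto f-x$ a strictly order-reversing involution of $\{f_1,\dots,f_{t-1}\}$, which is necessarily $i\mapsto t-i$; this is sound. For $(2)\Rightarrow(1)$, your closure property of ${\rm L}(S)$ is correct (if $l\in{\rm L}(S)$, $s\in S\setminus\{0\}$ and $l+s\notin S$ but $f-(l+s)\in S$, then $f-l\in S$, a contradiction), it does imply that every $\leq_S$-maximal element of the finite set ${\rm L}(S)$ lies in ${\rm PF}(S)$, and since $f\notin{\rm L}(S)$ such a maximal element $l'$ equals some $f_j$ with $j<t$, so that (2) puts $f-l'$ in ${\rm PF}(S)$ and pins $l=l'$. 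The only cosmetic point is the degenerate case $t=1$, where ``$l'=f_j$ for some $j<t$'' is impossible; your argument then correctly forces ${\rm L}(S)=\emptyset$ (i.e.\ $S$ symmetric), but it would be worth saying so explicitly rather than phrasing that step as if it always succeeds. Overall this is a clean elementary proof, arguably more transparent than invoking the external reference, and it fits the paper's toolkit since it reuses Lemma \ref{almost symmetric} and the inclusion ${\rm PF}(S)\subseteq{\rm L}(S)\cup\{f\}$ already recorded in the preliminaries.
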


In the rest of the paper we distinguish between almost symmetric
numerical semigroups with odd and even type. Luckily from \cite[Theorem 3]{RG2}
can be deduced a nice distinction between them; here we give a direct proof of this fact.

\begin{prop} \label{frobenius odd}
Let $T$ be an almost symmetric numerical semigroup, then $T$ has odd type if and only if $f(T)$ is odd.
\end{prop}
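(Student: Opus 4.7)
The plan is to combine Nari's theorem (Theorem \ref{Nari}) with the characterisation of almost symmetric semigroups given in Lemma \ref{almost symmetric}, writing $\mathrm{PF}(T) = \{f_1 < \dots < f_{t-1} < f_t\}$ with $t = t(T)$ and $f_t = f(T) =: f$. The argument naturally splits into the two implications, and I will prove the contrapositive form: $t$ is even if and only if $f$ is even.

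For the direction ``$t$ even $\Rightarrow$ $f$ even'', I would simply exploit the symmetric pairing from Nari's theorem. Writing $t = 2k$, the index $i = k$ lies in $\{1,\dots,t-1\}$ (provided $t \geq 2$), and the relation $f_i + f_{t-i} = f$ becomes $f = f_k + f_k = 2f_k$, which is even. This is a one-line computation; no obstacle here.

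The converse ``$f$ even $\Rightarrow$ $t$ even'' is where the work is. I would apply Lemma \ref{almost symmetric} to the element $s = f/2$ (this requires $T \neq \mathbb{N}$, so that $f \geq 1$ and $s \neq 0$). The biconditional
$$f/2 \in T \ \Longleftrightarrow\ f/2 \notin T \cup \mathrm{PF}(T)$$
is self-contradictory in the first case (since $f/2 \in T$ would force $f/2 \notin T$), so we must be in the second case: $f/2 \notin T$ and $f/2 \in T \cup \mathrm{PF}(T)$, whence $f/2 \in \mathrm{PF}(T)$. Because $f/2 < f$, we have $f/2 = f_i$ for some $i \in \{1,\dots,t-1\}$, and Nari's relation then gives $f_{t-i} = f - f_i = f/2 = f_i$, hence $t - i = i$, so $t = 2i$ is even.

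Together these two implications yield the equivalence, and the statement of the proposition follows by negation. The step that requires the most care is the application of Lemma \ref{almost symmetric}: one must recognise that the biconditional is self-defeating in one of the two cases, forcing $f/2$ to be a pseudo-Frobenius number rather than an ordinary gap; once that is pinned down, the Nari pairing closes the argument immediately. The only subtlety worth flagging is the implicit hypothesis $T \neq \mathbb{N}$, which is in force since otherwise $\mathrm{PF}(T) = \emptyset$ and the statement $t$ odd iff $f$ odd degenerates ($t = 0$, $f = -1$).
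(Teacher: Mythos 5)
Your proof is correct and follows essentially the same route as the paper: both directions rest on Nari's pairing $f_i+f_{t-i}=f$ (which forces $t$ even exactly when $f/2\in\mathrm{PF}(T)$), and the direction ``$f$ even $\Rightarrow$ $t$ even'' is settled, as in the paper, by applying Lemma~\ref{almost symmetric} to $s=f/2$ to conclude $f/2\in\mathrm{PF}(T)$. (Only your closing aside is off: with the paper's definition $\mathrm{PF}(\mathbb{N})=\{-1\}$, so $t(\mathbb{N})=1$, not $0$ — but this does not affect the argument.)
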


\begin{proof}
Let PF$(S)=\{f_1 < \dots <f_{t-1} < f\}$. By Theorem \ref{Nari}
one has $f_i+f_{t-i}=f$, then $t$ is even if and only if $f/2 \in
{\rm PF}(S)$. Consequently if $f$ is odd then $t$ is odd.
Conversely suppose that $f$ is even; note that $f/2 \notin S$,
otherwise the Frobenius number is in $S$, and that, by Lemma
\ref{almost symmetric}, $f/2=f-f/2 \in S \cup {\rm PF}(S)$; hence
$f/2 \in {\rm PF}(S)$.
\end{proof}

Finally we remember that a numerical semigroup $S$ is one half of
$T$ if $S = \{ s \in \mathbb{N} | \ 2s \in T \}$ and in this case
we will write $S= \frac{T}{2}$.

\section{One half of almost symmetric numerical semigroups with odd type}

In this section we study those numerical semigroups that are one
half of almost symmetric semigroups with odd type.

\begin{thm} \label{odd}
Let $T$ be an almost symmetric numerical semigroups with odd type
$t$. If $S$ is one half of $T$ then $t(S)\geq (t-1)/2$.
\end{thm}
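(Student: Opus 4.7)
The plan is to count the even pseudo-Frobenius numbers of $T$ and show that halving each of them produces a distinct pseudo-Frobenius number of $S$. Since $t(T)=t$ is odd, Proposition~\ref{frobenius odd} forces $f:=f(T)$ to be odd. Writing $\mathrm{PF}(T)=\{f_1<\dots<f_{t-1}<f\}$, Theorem~\ref{Nari} yields $f_i+f_{t-i}=f$ for every $i\in\{1,\dots,t-1\}$. Because $f$ is odd, each of the $(t-1)/2$ pairs $\{f_i,f_{t-i}\}$ (with $1\leq i\leq (t-1)/2$) consists of exactly one even and one odd integer, and $f$ itself is odd; hence $\mathrm{PF}(T)$ contains exactly $(t-1)/2$ even elements.

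The key step is to check that the map $x\mapsto x/2$ carries every even $x\in\mathrm{PF}(T)$ to an element of $\mathrm{PF}(S)$. This is immediate from the definitions: first, $x/2\notin S$, since otherwise $x=2(x/2)\in T$ would contradict $x\notin T$; second, for every $s\in S\setminus\{0\}$ one has $2s\in T\setminus\{0\}$, so $x+2s\in T$ by the pseudo-Frobenius property of $x$, which says exactly that $x/2+s\in S$. Distinct even elements of $\mathrm{PF}(T)$ produce distinct halves, so $S$ has at least $(t-1)/2$ pseudo-Frobenius numbers, giving $t(S)\geq (t-1)/2$.

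I do not expect a real obstacle. The count of even elements of $\mathrm{PF}(T)$ is an immediate consequence of the Nari symmetry combined with the parity of $f$, and the halving map is a one-line check from the definitions of pseudo-Frobenius number and of $S=T/2$. The only minor point to confirm is that pseudo-Frobenius numbers are non-negative so that $x/2$ is a legitimate candidate in $\mathbb{N}$, which is standard.
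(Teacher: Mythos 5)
Your proposal is correct and follows essentially the same route as the paper: count the $(t-1)/2$ even elements of $\mathrm{PF}(T)$ via Nari's symmetry and the oddness of $f(T)$, then verify that halving each even pseudo-Frobenius number of $T$ yields a pseudo-Frobenius number of $S$. The final worry about non-negativity is moot since the paper defines $\mathrm{PF}(S)$ as a subset of $\mathbb{Z}$, so nothing further is needed.
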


\begin{proof}
Let PF$(T)=\{ f_1 < \dots < f_{t-1} < f \}$; by Theorem \ref{Nari}
one has $f_i+f_{t-i}=f$. In particular, since $f$ is odd, in ${\rm
PF}(T)$ there are $(t-1)/2$ even elements and $1+(t-1)/2$ odd
elements.

Consider an even pseudo-Frobenius number $f_i=2e_i$; clearly $e_i
\notin S$ and we claim that $e_i \in {\rm PF}(S)$. Let $s \in S
\setminus \{0\}$, then $2s \in T$ and therefore $2(e_i+s)=f_i+2s
\in T$, because $f_i \in {\rm PF}(T)$; consequently $e_i+s \in S$
for any $s \in S \setminus \{0\}$, that is $e_i \in {\rm PF}(S)$.

Hence there are at least $(t-1)/2$ pseudo-Frobenius numbers in $S$.
\end{proof}

\begin{rem}
In general, there is not an upper bound for $t(S)$. In fact, in
\cite{RGS} is proved that every numerical semigroup is one half of
a symmetric numerical semigroup; then, even if we restrict to the case $T$
symmetric, $S$ may be {\it any} numerical semigroup.
\end{rem}

Let $S$ be a numerical semigroup, $E$ a proper ideal of $S$ and
$b$ an odd element of $S$. In \cite{DS} is defined the
\textit{numerical duplication} of $S$ with respect to $E$
as the numerical semigroup
$$
\du = 2\cdot S \cup (2\cdot E +b)
$$
where $2 \cdot S=\{2s | \ s \in S\}$ and $2 \cdot E=\{2e| \ e \in E\}$.

\vspace{1em}
This construction is motivated by a commutative
algebra construction (see \cite{BDS}), but we are interested in it
because can be used to construct almost symmetric semigroups. For
example Corollary $4.9$ of \cite{DS} shows that, starting with a
numerical semigroup $S$, it is possible to choose proper ideals $E_0,
E_1, \dots , E_{t(S)}$, such that $\du_0, \dots , \du_{t(S)}$ are
almost symmetric of type $1, 3, 5, \dots, 2t(S)+1$, respectively.

Coming back to Theorem \ref{odd}, the inequality of the statement
is equivalent to $t \leq 2t(S)+1$, so the previous remark shows
that this estimation is sharp.

Unfortunately there exist almost symmetric numerical semigroups
with odd type that cannot be constructed in this way. For example
consider $T=\langle 9,10,14,15 \rangle =
\{0,9,10,14,15,18,19,20,23,24,25,27,28,29,30,32 \rightarrow \}$,
where $\rightarrow$ means that all integers greater than $32$ are in $T$;
in this case $S=\frac{T}{2}=\{0,5,7,9,10,12,14 \rightarrow \}$.
The point is to choose $b \in S$. We must have $2 \cdot
E+b=\{9,15,19,23,25,27,29,33,35,37 \dots\}$, so we have
$$
\begin{array}{ll}
 E=\{2,5,7,9,10,11,12,14 \rightarrow \} \ \ \ \ \ \ \ \ \ \ \ \ \ \ \ \ \ \ \ \ \ \ \ & \text{if } b=5 \\
 E=\{1,4,6,8,9,10,11,13 \rightarrow \}  & \text{if } b=7 \\
 E=\{0,3,5,7,8,9,10,12 \rightarrow \}  & \text{if } b=9 \\
 E \text{ contains a negative element} & \text{if } b>9
\end{array}
$$

in any case $E$ is not contained in $S$ and then $E$ is not a proper ideal of $S$.

\vspace{1em}

To solve this problem note that, if $E$ is not a proper ideal, but
a relative ideal such that $b+E+E \subseteq S$, then $\du$ is
still a numerical semigroup. Under this easy generalization, we
can construct all numerical semigroups.

\begin{prop} \label{duplication}
Every numerical semigroup $T$ can be realized as numerical
duplication $\du$, where $S=\frac{T}{2}$, $b$ is an odd element of
$S$ and $E$ is a relative ideal of $S$ such that $b+E+E \subseteq
S$.
\end{prop}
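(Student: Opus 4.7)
The plan is to reverse-engineer the construction: choose $b$ and define $E$ so that $2\cdot E+b$ captures exactly the odd part of $T$. Since $S=T/2$ is a numerical semigroup, it contains all sufficiently large integers and in particular odd elements; fix any odd $b\in S$. Likewise $T$ itself contains odd integers, so $T^{\mathrm{odd}}=\{t\in T: t \text{ is odd}\}$ is nonempty, and I would set
$$
E=\bigl\{(t-b)/2 : t\in T^{\mathrm{odd}}\bigr\}\subseteq\mathbb{Z},
$$
which is well-defined since $t-b$ is even whenever $t$ and $b$ are both odd. By construction $2\cdot E+b=T^{\mathrm{odd}}$, and since $2\cdot S$ is exactly the set of even elements of $T$, the union $\du=2\cdot S\cup(2\cdot E+b)$ is tautologically equal to $T$.

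It then remains to verify that $E$ is a relative ideal of $S$ and that $b+E+E\subseteq S$. The inclusion $E+S\subseteq E$ is immediate, because for $e=(t-b)/2$ and $s\in S$ one has $e+s=(t+2s-b)/2$ with $t+2s$ again an odd element of $T$. The existence of some $x\in S$ with $x+E\subseteq S$ uses the observation that $E$ has a smallest element $(o-b)/2$, where $o$ is the least odd element of $T$: any $x\in S$ large enough that $2x+o-b>f(T)$ works, and such $x$ exist because $S$ contains all sufficiently large integers. Finally, the condition $b+E+E\subseteq S$ is the cleanest of the three, since the algebra simplifies to $b+(t_1-b)/2+(t_2-b)/2=(t_1+t_2)/2$, which lies in $S$ because $t_1+t_2\in T$.

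The only genuinely conceptual move is the definition of $E$; once made, every verification reduces to $T$ being closed under addition together with $b$ being odd. There is no substantive obstacle — the one potential worry, that $E$ might contain negative integers, is harmless, because the relative-ideal axiom merely asks for some element of $S$ to translate $E$ into $S$, not that $E$ itself lie in $\mathbb{N}$.
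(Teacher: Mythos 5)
Your construction $E=\{(t-b)/2: t\in T,\ t\ \text{odd}\}$ is exactly the paper's $E=\frac{T-b}{2}$, and the verifications (closure under $S$, the condition $b+E+E\subseteq S$, and the tautological equality $T=\du$) match the paper's argument, with your direct computations replacing its proof by contradiction for the ideal property. The proposal is correct and takes essentially the same approach; if anything it is slightly more complete, since you also check the second relative-ideal axiom ($x+E\subseteq S$ for some $x\in S$), which the paper leaves implicit.
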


\begin{proof}
Let $b$ be an odd element of $S$ and set $E=\frac{T-b}{2}$
(it is possible that $T-b$ contains negative elements and in 
this case we have negative elements in $\frac{T-b}{2}$).
Suppose that $E$ is not a relative ideal of $S$, that is, there
exist $s \in S$ and $e \in E$ such that $s+e \notin E$;
consequently $2(s+e)+b \notin T$, but $2s+(2e+b) \in T+T \subseteq
T$, since $s \in S$ and $e \in E$; contradiction. This means that
$E$ is a relative ideal of $S$.

Let $e,e'$ be two elements of $E$, then $b+2e$ and $b+2e'$ is in
$T$; therefore $2b+2e+2e' \in T$ and it is equivalent to $b+e+e'
\in S$. Hence $b+E+E \subseteq S$.

Finally, by construction, it is clear that $T=\du$ .
\end{proof}

Note that in the previous proof we have not determined $b$, so
there exist infinitely many ways to obtain the semigroup $T$ as a
numerical duplication.

The following corollary is straightforward.

\begin{cor} \label{fraction}
Let $S$ be a numerical semigroup. Then every semigroup $T$ such
that $S=\frac{T}{2}$, is equal to $\du$ for some relative ideal $E$
and an odd integer $b \in S$.
\end{cor}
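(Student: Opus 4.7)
The plan is to apply Proposition~\ref{duplication} directly, since the corollary is essentially just a rephrasing from the point of view of a fixed half $S$ rather than a fixed total semigroup $T$. Given a numerical semigroup $S$ and any numerical semigroup $T$ with $S=\frac{T}{2}$, I would feed $T$ into Proposition~\ref{duplication}. The proposition produces an odd element $b\in\frac{T}{2}$ and a relative ideal $E$ of $\frac{T}{2}$ satisfying $b+E+E\subseteq\frac{T}{2}$, together with the identity $T=\frac{T}{2}\Join^{b} E$. Substituting $\frac{T}{2}=S$ into this identity, which is exactly what the hypothesis supplies, yields $T=\du$ for an odd $b\in S$ and a relative ideal $E$ of $S$, which is the statement of the corollary.

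The only ingredient that needs a passing mention is the existence of odd elements of $S$ implicitly required by the construction of the proposition: because $\mathbb{N}\setminus S$ is finite, $S$ contains all sufficiently large integers, hence infinitely many odd ones, so any such odd $b$ works (and correspondingly produces its own $E=\frac{T-b}{2}$, as in the proof of the proposition).

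There is no genuine obstacle in this step, which is why the excerpt calls the corollary \emph{straightforward}. The real work was done in Proposition~\ref{duplication}, where the nontrivial verification was that $E=\frac{T-b}{2}$ is a relative ideal and that $b+E+E\subseteq S$; here one simply quotes that conclusion. The remark following the proposition, that the odd element $b$ is not uniquely determined, is consistent with the corollary: for a fixed $S$ and a fixed $T$ with $S=\frac{T}{2}$, there are in general infinitely many pairs $(b,E)$ realizing $T$ as $\du$.
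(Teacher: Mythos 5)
Your proposal is correct and matches the paper's intent exactly: the corollary is stated without proof precisely because it is the immediate restatement of Proposition~\ref{duplication} with $\frac{T}{2}$ replaced by the given $S$, which is what you do. Your added remark on the existence of odd elements of $S$ (from finiteness of $\mathbb{N}\setminus S$) is a harmless and valid bit of extra care.
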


Theorem 4.3 of \cite{DS} characterizes almost symmetric semigroups
realized as numerical duplication with respect to a proper ideal
$E$. We will see that in our case only one implication is true.

Let $\widetilde E$ denote the relative ideal $E-e$, where $e
:=f(E)-f(S)$; clearly $f(\widetilde E)=f(S)$. Moreover, we set
$f:=f(S), M:=M(S)$ and $K:=K(S)$.

First of all we note that, by definition, the Frobenius number of
$T=\du$ is the maximum between $2f(S)$ and $2f(E)+b$. If $E$ is
proper, then $f(E) \geq f(S)$ and so $f(T)=2f(E)+b$; in general it
is possible that $2f(S)>2f(E)+b$: in fact, by Proposition
\ref{duplication}, $f(T)$ can be even. However, if $T$ is an
almost symmetric numerical semigroup with odd type,  we have
$f(T)=2f(E)+b$ by Proposition \ref{frobenius odd}. Thanks to this,
the proof of one implication of \cite[Theorem 4.3]{DS} works also
if $E$ is a relative and not proper ideal of $S$ (obviously
provided $E+E+b \subseteq S$); hence we will not write the proof
of the following proposition.

\begin{prop} \label{first part}
Let $T$ be an almost symmetric numerical semigroup with odd type.
Then there exist a numerical semigroup $S$, a relative ideal $E$
of $S$ and an odd integer $b \in S$ such that $T=\du$. Moreover
for every choose of such $S,E,b$ one has $f(T)=2f(E)+b, K-(M-M)
\subseteq \widetilde E \subseteq K$, and $K-\widetilde E$ is a
numerical semigroup.
\end{prop}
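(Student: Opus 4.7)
The plan splits into existence plus four moreover-claims, and existence is immediate: applying Proposition \ref{duplication} to $T$ with $S:=T/2$ produces an odd $b\in S$ and a relative ideal $E=(T-b)/2$ with $T=\du$. So I would fix from now on any such triple $(S,E,b)$ and verify the four assertions.

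The equality $f(T)=2f(E)+b$ is the only place where the odd type of $T$ is used crucially, and the idea is short. From $T=2\cdot S\cup (2\cdot E+b)$ the missing integers split by parity, so $f(T)=\max(2f(S),\,2f(E)+b)$. Proposition \ref{frobenius odd} forces $f(T)$ odd, while $2f(S)$ is even and $b$ is odd, so $f(T)=2f(E)+b$; setting $e=f(E)-f$ this rewrites as $f(T)=2f+2e+b$, a form I would carry through the remaining steps. The inclusion $\widetilde E\subseteq K$ is then a direct contradiction argument: if $x\in\widetilde E$ and $f-x\in S$, then $2(x+e)+b$ and $2(f-x)$ both lie in $T$, yet their sum equals $f(T)$, which is impossible.

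For $K-(M-M)\subseteq \widetilde E$ I would take $x\in K-(M-M)$ and verify $n:=2(x+e)+b\in T$ by applying Lemma \ref{almost symmetric}, noting that $n$ is odd hence nonzero. The lemma reduces the claim to $f(T)-n=2(f-x)\notin T\cup{\rm PF}(T)$. The $\notin T$ part uses $x\in K$ together with the parity mismatch between $2(f-x)$ and $2\cdot E+b$. The $\notin{\rm PF}(T)$ part is where the hypothesis $x\in K-(M-M)$ does its work: if $2(f-x)\in{\rm PF}(T)$, then for every $s\in M$ the element $2(f-x+s)$ lies in $T\cap 2\mathbb{Z}=2\cdot S$, forcing $f-x\in M-M=S\cup{\rm PF}(S)$; but then by definition of $K-(M-M)$ one would have $x+(f-x)=f\in K$, contradicting $0\in S$.

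The main obstacle, which the author flags and which I would expect to absorb the bulk of the work, is showing that $K-\widetilde E$ is a numerical semigroup. The easy parts are containment of $0$ (from $\widetilde E\subseteq K$), containment of $S$ (since $S\subseteq K-K\subseteq K-\widetilde E$, yielding cofiniteness), and $K-\widetilde E\subseteq \mathbb{N}$ (using $K\subseteq\mathbb{N}$, which holds because any negative $x$ has $f-x>f\in\mathbb{Z}\setminus S$ forcing $f-x\in S$). Closure under addition is the delicate point; my plan would be to follow verbatim the argument of \cite[Theorem 4.3]{DS}, which leverages the self-duality $K-(K-I)=I$ for $I\subseteq K$ together with the sandwich $K-(M-M)\subseteq \widetilde E\subseteq K$ just established. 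The only thing to check when passing from the proper-ideal setting of \cite{DS} to our relative-ideal setting is that the intermediate objects continue to be relative ideals of $S$ (which they do, since this was built into $\widetilde E$ from the outset), and that the identity $f(T)=2f(E)+b$ replaces the inequality $f(E)\geq f(S)$ used there, which is exactly what the odd-type hypothesis supplies.
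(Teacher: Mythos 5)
Your proposal is correct and follows essentially the same route as the paper: existence via Proposition \ref{duplication}, the identity $f(T)=2f(E)+b$ from the parity forced by Proposition \ref{frobenius odd}, and the remaining claims by the argument of one implication of \cite[Theorem 4.3]{DS} adapted to relative ideals, which is exactly what the paper does (it does not write the proof out at all, only noting that the cited argument goes through once $f(T)=2f(E)+b$ is known). You in fact supply more detail than the paper, and the direct verifications you give of $K-(M-M)\subseteq\widetilde E\subseteq K$ are sound.
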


The converse of the previous result is not true. Consider the
numerical semigroup $S=\{0,4,5,6,8 \rightarrow\}$ and the relative
ideal $E=\{2,3,4,6 \rightarrow \}$. It is straightforward to check
that $K-(M-M)=M=\widetilde E, K=S, E+E+5 \subseteq S, K- \widetilde
E=M-M$ and $2f(E)+5 > 2f(S)$; then $T= S \! \Join^5 \! E$ is a
numerical semigroup, $K-(M-M) \subseteq \widetilde E \subseteq K$
and $K-\widetilde E$ is a numerical semigroup. However
$T=\{0,8,9,10,11,12,13,16 \rightarrow \}$ is not almost symmetric
because $1 \in {\rm L}(T) \setminus {\rm PF}(T)$.

If we look at the proof of \cite[Theorem 4.3]{DS}, we see that it
works also when $E$ is a relative ideal except for case {\bf
(iii)}. From this observation it comes out the idea of the next
theorem.

First of all, we recall and generalize some results of \cite{DS}.
The standard canonical ideal of $\du$ is the set of elements
$$ z=f(\du)-a \ \
\text{with} \ \
  \begin{cases}
    \frac{a}{2} \notin S,  & a \ \text{even}, \\
    \frac{a-b}{2} \notin E, & a \ \text{odd}.
  \end{cases}
$$

The next lemma is proved in \cite[Lemma 4.1 and Lemma 4.2]{DS}; in
the original statement $E$ is a proper ideal, but the proof works
also when $E$ is a relative ideal.

\begin{lem}\label{2}
Let $E$ be a relative ideal of $S$. Assume that $K-(M-M) \subseteq \widetilde E$. Then we have: \\
{\rm (1)} for any $x \notin E$, $f(E)-x \in M-M$; \\
{\rm (2)} if moreover $K-\widetilde E$ is a numerical semigroup, then, for any $x \notin
E$, $f(E)-x \in E-E$.
\end{lem}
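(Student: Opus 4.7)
The plan is to reduce both assertions to the translate $\widetilde E$, which has $f(\widetilde E) = f$. Setting $y := x - (f(E) - f)$ converts the two conclusions to: (1$'$) if $y \notin \widetilde E$, then $f - y \in M - M$; and (2$'$) under the extra hypothesis, $f - y \in \widetilde E - \widetilde E$ (the latter coinciding with $E - E$, since translating both arguments cancels). I would therefore work entirely with $\widetilde E$.

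For (1$'$) I would simply unpack the assumption: since $y \notin \widetilde E$ and $K - (M-M) \subseteq \widetilde E$, there must exist $g \in M - M$ with $y + g \notin K$, equivalently $f - y - g \in S$. Then
$$f - y = g + (f - y - g) \in (M-M) + S \subseteq M - M,$$
because $M - M$ is a numerical semigroup containing $S$.

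For (2$'$) I would first establish an auxiliary observation: $f - y \in K - \widetilde E$. Indeed, for any $\widetilde e \in \widetilde E$, if $(f - y) + \widetilde e$ were not in $K$, then $y - \widetilde e \in S$, forcing $y = \widetilde e + (y - \widetilde e) \in \widetilde E + S \subseteq \widetilde E$, a contradiction. Then I would argue by contradiction: assume $(f - y) + \widetilde e_0 \notin \widetilde E$ for some $\widetilde e_0 \in \widetilde E$, and apply the auxiliary observation with $y$ replaced by $y_1 := (f - y) + \widetilde e_0$ to get $y - \widetilde e_0 = f - y_1 \in K - \widetilde E$. Now I would invoke that $K - \widetilde E$ is a numerical semigroup, hence closed under addition:
$$(f - y) + (y - \widetilde e_0) = f - \widetilde e_0 \in K - \widetilde E,$$
which evaluated on $\widetilde e_0 \in \widetilde E$ gives $(f - \widetilde e_0) + \widetilde e_0 = f \in K$. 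But $f \in K$ means $0 = f - f \notin S$, absurd.

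The hard part is (2$'$): recognizing how to exploit the hypothesis that $K - \widetilde E$ is a numerical semigroup. The key trick is to pair the ``forward'' element $f - y$ with the ``backward'' element $y - \widetilde e_0$ produced by the failure hypothesis; closure under addition then collapses them to $f - \widetilde e_0$, which in turn forces $f$ itself into $K$, the one element we know is excluded. Part (1$'$) and the auxiliary observation in (2$'$) are routine definition-chasing.
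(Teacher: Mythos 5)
Your proof is correct. Note that for this lemma the paper itself gives no argument: it defers to Lemmas 4.1 and 4.2 of \cite{DS} and merely remarks that the proof there goes through when $E$ is a relative rather than a proper ideal. So you have supplied a proof the paper omits. Your reduction to $\widetilde E$ and your part $(1')$ are exactly the expected unpacking: $y\notin K-(M-M)$ yields $g\in M-M$ with $f-y-g\in S$, and $(M-M)+S\subseteq M-M$ finishes. For part $(2')$, the cited source argues via J\"ager-type duality (essentially $y\notin\widetilde E$ gives $f-y\in K-\widetilde E$, and $K-\widetilde E\subseteq (K-\widetilde E)-(K-\widetilde E)=\widetilde E-\widetilde E$ once $K-\widetilde E$ is closed under addition), whereas your contradiction argument --- pairing $f-y$ with the element $y-\widetilde e_0$ produced by a hypothetical failure, summing them inside the numerical semigroup $K-\widetilde E$ to get $f-\widetilde e_0$, and then forcing the absurdity $f\in K$ --- is more elementary and avoids invoking the duality $K-(K-F)=F$. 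Two side observations, neither of which is a flaw: your auxiliary observation is precisely one direction of Lemma~\ref{Jager} applied to $\widetilde E$, so you could have quoted it; and your proof of $(2')$ never uses the standing hypothesis $K-(M-M)\subseteq\widetilde E$, only that $\widetilde E$ is a relative ideal with $f(\widetilde E)=f$ and that $K-\widetilde E$ is closed under addition, so you have in fact proved a slightly stronger statement than the one asserted.
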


Now we can construct every almost symmetric numerical semigroup
with odd type.

\begin{thm} \label{main odd}
A numerical semigroup $T$ is almost symmetric with odd type
if and only if there exist a relative ideal $E$ of $S:=\frac{T}{2}$
and an odd integer $b \in S$ such that:  \\
{\rm (1)} $T= \du$; \\
{\rm (2)} $f(T)=2f(E)+b$; \\
{\rm (3)} $K-(M-M) \subseteq \widetilde E \subseteq K$; \\
{\rm (4)} $K-\widetilde E$ is a numerical semigroup; \\
{\rm (5)} $b+e+E+K \subseteq M$.
\end{thm}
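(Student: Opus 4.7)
The proof divides into two directions.

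\emph{Forward direction.} Assume $T$ is almost symmetric with odd type. Setting $S:=\frac{T}{2}$, choosing any odd $b\in S$, and letting $E:=\frac{T-b}{2}$ as in Proposition \ref{duplication}, Proposition \ref{first part} delivers conditions (1)-(4) for free, so only (5) requires work. To show $b+e+e_1+k\in M$ for arbitrary $e_1\in E$ and $k\in K$, I would use the identity
\[
2(b+e+e_1+k)=f(T)+(b+2e_1)+2(k-f(S)),
\]
which follows from $e=f(E)-f(S)$ and $f(T)=2f(E)+b$. Since $k\in K\setminus\{f(S)\}$ (as $f(S)\notin K$) forces $f(S)-k\notin S$ and hence $2(f(S)-k)\notin T$, Lemma \ref{almost symmetric} applied to $T$ yields $b+2e+2k=f(T)-2(f(S)-k)\in T\cup\text{PF}(T)$. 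Combined with $b+2e_1\in T$ and the Nari pairing of Theorem \ref{Nari}, this should force $2(b+e+e_1+k)\in T$, i.e., $b+e+e_1+k\in S$; a positivity check then places it in $M$.

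\emph{Backward direction.} Assume (1)-(5). From (2), $f(T)=2f(E)+b$ is odd (as $b$ is odd), so by Proposition \ref{frobenius odd} the odd type will be automatic once almost symmetry is established. I would verify almost symmetry via Lemma \ref{almost symmetric}: for every $s\in\mathbb{Z}\setminus\{0\}$ with $s\notin T$, show $f(T)-s\in T\cup\text{PF}(T)$. Split on parity. If $s=2x$ is even, then $x\notin S$ and $f(T)-s=b+2(f(E)-x)$ is odd; Lemma \ref{2}(2), available by (3) and (4), places $f(E)-x\in E-E$, so either $f(E)-x\in E$ (giving $f(T)-s\in T$) or $f(E)-x\in(E-E)\setminus E$, in which case I must verify $f(T)-s\in\text{PF}(T)$. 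This verification reduces to showing $(f(T)-s)+t\in T$ for every $t\in T\setminus\{0\}$; splitting $t\in 2S$ versus $t\in 2E+b$ and computing, the second (odd-part) case is exactly where condition (5) enters, producing $b+e+e_1+k\in M$ so that the sum lands in $2S\subseteq T$. The case $s$ odd is parallel, using Lemma \ref{2}(1) and condition (4) to reduce to statements about $K-\widetilde E$.

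The \emph{main obstacle} is the pseudo-Frobenius verification in the ``$(E-E)\setminus E$'' sub-case and its odd analogue: this is precisely the gap left open by case (iii) of \cite[Theorem 4.3]{DS} and exposed by the counterexample $S=\{0,4,5,6,8\rightarrow\}$ preceding the theorem, and condition (5) is tailored to close it. The delicate bookkeeping is tracking which sums land in $2S$, which land in $2E+b$, and which fall outside $T$, while uniformly invoking conditions (3)-(5) to rule out stray elements of $L(T)\setminus\text{PF}(T)$.
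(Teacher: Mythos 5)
Your forward direction is correct and is a genuinely different, self-contained way to obtain condition (5): the paper instead reruns its case {\bf (iii)}, applying $M(T)+K(T)\subseteq M(T)$ to $2e_1+b\in M(T)$ and $2f(E)+b-a\in K(T)$ with $a$ even, whereas you derive (5) from the identity $2(b+e+e_1+k)=(2e+b+2k)+(b+2e_1)$ together with Lemma \ref{almost symmetric} applied to $T$. One small point: Theorem \ref{Nari} is not needed there; once $2e+b+2k\in T\cup{\rm PF}(T)$, adding the nonzero element $b+2e_1$ of $T$ lands in $T$ by the very definition of ${\rm PF}(T)$, and positivity of both summands puts the result in $M(T)$, hence $b+e+e_1+k\in M$. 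Your backward direction, once unwound, is the paper's computation: checking $(f(T)-s)+t\in T$ for every gap $s$ of $T$ and every $t\in M(T)$ is exactly checking $M(T)+K(T)\subseteq M(T)$, and your parity split on $s$ combined with the split $t\in 2\cdot S$ versus $t\in 2\cdot E+b$ reproduces the paper's four cases {\bf (i)}--{\bf (iv)}.

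However, one step as written would fail. In the even case $s=2x$ you only know $x\notin S$, while both parts of Lemma \ref{2} require $x\notin E$; since $E$ is merely a relative ideal and need not be contained in $S$, you cannot conclude $f(E)-x\in E-E$ this way (and even if you could, that would not decide whether $f(E)-x$ lies in $E$). What actually closes this case is: for $t=2m$ with $m\in M$, the sum is odd and lies in $T$ iff $m+f-x\in\widetilde E$, which follows from $f-x\in K$ (as $x\notin S$) and $M+K\subseteq K-(M-M)\subseteq\widetilde E$, i.e.\ the left half of condition (3); for $t=2e_1+b$ the sum is even and condition (5) applies, exactly as you say. Lemma \ref{2} is the right tool only in the odd case $s=a$ with $\frac{a-b}{2}\notin E$, where its hypothesis is genuinely satisfied. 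Relatedly, your preliminary dichotomy ``$f(E)-x\in E$ versus $f(E)-x\in(E-E)\setminus E$'' is an unnecessary detour: since $T\cup{\rm PF}(T)=M(T)-M(T)$, it suffices to verify $(f(T)-s)+M(T)\subseteq M(T)$ uniformly, which is what both you and the paper end up doing. With that one substitution your outline becomes the paper's proof.
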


\begin{proof}
As we said above, the proof is only a modification of the proof of
\cite[Theorem 4.3]{DS}. However we include the complete proof for
the sake of completeness.

Assume that the five conditions of the statement hold and
prove that $T$ is almost symmetric, i.e. $M(T)+K(T) \subseteq M(T)$. There are four cases: \\\\
{\bf (i)} $2s \in M(T)$ and $2f(E)+b-a \in K(T)$, where $s \in
  M$, $a$ is even and $\frac{a}{2} \notin S$; \\
{\bf (ii)} $2s \in M(T)$ and $2f(E)+b-a \in K(T)$, where $s \in
  M$, $a$ is odd and $\frac{a-b}{2} \notin E$; \\
{\bf (iii)} $2t+b \in M(T)$ and $2f(E)+b-a \in K(T)$, where $t \in
  E$, $a$ is even and $\frac{a}{2} \notin S$; \\
{\bf (iv)} $2t+b \in M(T)$ and $2f(E)+b-a \in K(T)$, where $t \in
  E$, $a$ is odd and $\frac{a-b}{2} \notin E$. \\\\
{\bf (i)} Since \ $2s+2f(E)+b-a$ \ is odd, it belongs to $M(T)$ if
and only if \ $s+f(E)-\frac{a}{2} \in E$, \ i.e.  $s+f-\frac{a}{2}
\in \widetilde E$. Since $\frac{a}{2} \notin S$, i.e.
$f-\frac{a}{2} \in K$, we obtain $s+f-\frac{a}{2}\in M+K \subseteq
K-(M-M) \subseteq \widetilde E$.

\noindent {\bf (ii)} Since $2s+2f(E)+b-a$ is even, it belongs to
$M(T)$ if and only if $s+f(E)-\frac{a-b}{2} \in M$. Since
$\frac{a-b}{2}\notin E$, we can apply Lemma \ref{2} to obtain
$f(E)-\frac{a-b}{2} \in M-M$, that implies the thesis.

\noindent {\bf (iii)} Since $2t+b+2f(E)+b-a$ is even, it belongs
to $M(T)$ if and only if $t+b+f(E)-\frac{a}{2} \in M$, i.e. $t+b+e+f-\frac{a}{2} \in M$.
But this is true by Condition $5$, indeed $t \in E$ and $f-\frac{a}{2} \in K$.

\noindent {\bf (iv)} Since $2t+b+2f(E)+b-a$ is odd, it belongs to
$M(T)$ if and only if $t+f(E)-\frac{a-b}{2} \in E$. Since
$\frac{a-b}{2} \notin E$, the thesis follows immediately by Lemma
\ref{2}.

This proves that $T$ is almost symmetric; moreover, by Proposition
\ref{frobenius odd} its type is odd.

Conversely, we have already seen in Proposition \ref{first part}
that the first four conditions hold. Moreover, it is easy to see
that the last one is true: in fact we can use the same argument of
case {\bf (iii)} above.
\end{proof}

\begin{rem}
If the conditions of the previous theorem are satisfied, then
$E=e+\widetilde E \subseteq e+K$; consequently, thanks to the
fifth condition, we always have $E+E+b \subseteq S$.
\end{rem}

Anyway we notice that there are not problems when $E$ is a
canonical ideal, in fact, if we consider only semigroups with odd
Frobenius number, the proof of \cite[Proposition 3.1]{DS} still
works. So we have the following theorem:

\begin{thm}
The numerical semigroup $\du$ is symmetric if and only if $2f(E)+b>2f(S)$ and $E$ is a canonical ideal of $S$.
\end{thm}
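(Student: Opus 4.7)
My plan is to adapt the proof of \cite[Proposition 3.1]{DS} to the case where $E$ is only a relative ideal of $S$. The first move is to observe that, for the forward direction, the inequality $2f(E)+b > 2f(S)$ comes essentially for free. If $T = \du$ is symmetric, then $t(T)=1$ is odd, so by Proposition \ref{frobenius odd} $f(T)$ is odd. Since $f(T)$ is the maximum of the even integer $2f(S)$ and the odd integer $2f(E)+b$, this forces $2f(E)+b > 2f(S)$ and $f(T) = 2f(E)+b$. Once this inequality is known, in both directions we may assume $f(T) = 2f(E)+b$ is odd.

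Using the disjoint decomposition $T = 2S \cup (2E+b)$, every integer $x$ is uniquely of the form $2y$ or $2z+b$, and I would unpack the symmetry relation $x \in T \Leftrightarrow f(T)-x \notin T$ along these two cases. For $x = 2y$, noting that $f(T)-2y = 2f(E)+b-2y$ is odd, the relation becomes $y \in S \Leftrightarrow f(E)-y \notin E$. For $x = 2z+b$, noting that $f(T)-(2z+b) = 2(f(E)-z)$ is even, it becomes $z \in E \Leftrightarrow f(E)-z \notin S$. Hence $T$ is symmetric if and only if both
\begin{equation*}
y \in S \Longleftrightarrow f(E)-y \notin E \quad \text{and} \quad z \in E \Longleftrightarrow f(E)-z \notin S
\end{equation*}
hold for every $y,z \in \mathbb{Z}$.

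The final step is to show that this pair of conditions is equivalent to $\widetilde E = K$, i.e.\ to $E$ being a canonical ideal. Setting $e = f(E)-f(S)$ and $f=f(S)$, so $f(E) = f+e$, the first equivalence rearranges to $y \notin S \Leftrightarrow f(E)-y \in E \Leftrightarrow f-y \in \widetilde E$; substituting $w = f-y$ it reads $w \in \widetilde E \Leftrightarrow f-w \notin S$, which is precisely the defining property $K = \{x : f-x \notin S\}$. Hence $\widetilde E = K$. Conversely, this same chain of equivalences shows that $\widetilde E = K$ implies the first condition; substituting $z = \widetilde z + e$ in the second condition reduces it analogously to $\widetilde z \in \widetilde E \Leftrightarrow f-\widetilde z \notin S$, which again follows from $\widetilde E = K$. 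The main obstacle is keeping this shift by $e$ straight and correctly pairing each parity of $x$ with the opposite parity of $f(T)-x$; modulo this bookkeeping, the proof reduces to Proposition \ref{frobenius odd} and the standard duality between $S$ and $K$.
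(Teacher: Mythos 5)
Your argument is correct and is essentially the proof the paper has in mind: the paper simply defers to \cite[Proposition 3.1]{DS}, whose adaptation to relative ideals is exactly the parity-splitting of the symmetry condition $x\in T \Leftrightarrow f(T)-x\notin T$ together with the identification of the resulting condition with $\widetilde E=K$ that you carry out. Your observation that symmetry forces $f(T)$ odd via Proposition \ref{frobenius odd}, hence $f(T)=2f(E)+b>2f(S)$, correctly supplies the extra hypothesis that is automatic in the proper-ideal case treated in \cite{DS}.
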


\begin{cor}
Let $S$ be a numerical semigroup. Then the family of all symmetric
numerical semigroups $T$ such that $S=\frac{T}{2}$ is
$$
\mathcal D(S)=\{\du | \ E+E+b \subseteq S \rm{\ and \ } E \rm{ \ is \ a \ canonical \ ideal \ of \ } S \}
$$
\end{cor}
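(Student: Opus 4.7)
The plan is to reduce the corollary to the preceding theorem together with Corollary \ref{fraction}, establishing the two inclusions separately.

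For the inclusion showing that every symmetric $T$ with $\frac{T}{2} = S$ belongs to $\mathcal{D}(S)$: by Corollary \ref{fraction} I would write $T = \du$ for some relative ideal $E$ of $S$ and some odd $b \in S$. Closure of $T$ under addition applied to $(2E+b)+(2E+b)$ yields even integers, which must therefore lie in $2S$, giving $E+E+b \subseteq S$. The preceding theorem then forces $E$ to be a canonical ideal.

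For the reverse inclusion: starting from $\du$ with $E$ canonical and $E+E+b \subseteq S$, I would first note that $E = K+x$ is in particular a relative ideal of $S$, so the three containments $S+S \subseteq S$, $S+E \subseteq E$ and $E+E+b \subseteq S$ make $\du$ a numerical semigroup, whose half is $S$ (since any $2s \in 2E+b$ would force $b$ to be even, contradicting oddness of $b$). In order to invoke the preceding theorem I must still verify the strict inequality $2f(E)+b > 2f(S)$. I would obtain this by writing $E = K+x$: since $f(S)\notin S$ one has $0 \in K$, hence $x \in E$, and so $2x+b = x+x+b \in E+E+b \subseteq S$; being odd and nonnegative, $2x+b$ is strictly positive, and combined with $f(E) = f(K)+x = f(S)+x$ this yields $2f(E)+b-2f(S) = 2x+b > 0$.

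The essentially only obstacle is this strict inequality $2f(E)+b > 2f(S)$, which is not built into the definition of $\mathcal{D}(S)$ and must be recovered from the canonicality of $E$ together with the semigroup condition on $E+E+b$.
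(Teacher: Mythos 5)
Your proposal is correct and follows essentially the same route as the paper: reduce to the preceding theorem via Corollary \ref{fraction} (and the argument of Proposition \ref{duplication} for $E+E+b\subseteq S$), and recover the inequality $2f(E)+b>2f(S)$ from $E=K+x$, $0\in K$, and $2x+b\in S$ being odd and hence positive. The only difference is that you spell out a few routine verifications (that $\du$ is a numerical semigroup with half $S$) that the paper leaves implicit.
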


\begin{proof}
By Proposition \ref{duplication}, all semigroups can be
realized as numerical duplication with respect to a relative ideal 
$E$ such that $E+E+b \subseteq S$. Hence, recalling Corollary 
\ref{fraction}, we can use the previous theorem.

Finally note that if $E=K+x$, then $E+E+b \subseteq S$ implies
$2x+b>0$, because $0 \in K$. Since $f(E)=f(K)+x=f+x$, we have
$2f<2(f+x)+b=2f(E)+b$.
\end{proof}

Notice that $\mathcal D(S)$ is constructed by Rosales and
Garc\'ia-S\'anchez in \cite{RGS2} in a different way, but it is
easy to see that they coincide.

\section{One half of almost symmetric numerical semigroups with even type}

In this section we study when $T$ is almost symmetric with even
type or, equivalently, with even Frobenius number.

\begin{lem} \label{1}
Let $T$ be a numerical semigroup and ${\rm PF}(T)=\{f_1 < \dots < f_t \}$. Set $S:=\frac{T}{2}$. \\
{\rm (1)} If $f_i$ is even, then $\frac{f_i}{2} \in {\rm PF}(S)$.
In particular, the type of $S$ is greater than or equal to the number of even pseudo-Frobenius numbers of $T$. \\
{\rm (2)} If $f_t$ is even, then $f(S)=\frac{f_t}{2}$.
\end{lem}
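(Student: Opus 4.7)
The statement is essentially a direct translation through the halving map, so the proof should be short and computational rather than structural.

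For part (1), I would start with $f_i \in \mathrm{PF}(T)$ even and write $f_i = 2e$. To show $e \in \mathrm{PF}(S)$ I must verify two things: that $e \notin S$ and that $e+s \in S$ for every $s \in S \setminus \{0\}$. The first is immediate by contradiction: if $e \in S$ then $2e = f_i \in T$, contradicting $f_i \notin T$. For the second, take $s \in S \setminus \{0\}$ so that $2s \in T \setminus \{0\}$; then $2(e+s) = f_i + 2s \in T$ because $f_i$ is a pseudo-Frobenius number of $T$, hence $e+s \in S$ by definition of $\tfrac{T}{2}$. This gives $e \in \mathrm{PF}(S)$. The ``in particular'' clause then follows because the map $f_i \mapsto f_i/2$ on the even pseudo-Frobenius numbers of $T$ is injective and lands inside $\mathrm{PF}(S)$, so $t(S)$ bounds the number of such $f_i$ from above.

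For part (2), assume $f_t = f(T)$ is even. Part (1) already shows $f_t/2 \in \mathrm{PF}(S)$, so in particular $f_t/2 \notin S$, giving $f(S) \geq f_t/2$. For the reverse inequality, any integer $n > f_t/2$ satisfies $2n > f_t = f(T)$, hence $2n \in T$ and thus $n \in S$; so every integer strictly greater than $f_t/2$ is in $S$, which forces $f(S) \leq f_t/2$. Combining the two inequalities yields $f(S) = f_t/2$.

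There is no real obstacle here: the only subtlety is remembering that $s = 0$ must be excluded when testing the pseudo-Frobenius condition (otherwise one would need $f_i \in T$, which is false), and that the argument in part (2) uses part (1) only to get $f_t/2 \notin S$ — not the full pseudo-Frobenius conclusion. Both parts are one-line verifications once the definitions of $\mathrm{PF}$ and of the half-semigroup are unwound.
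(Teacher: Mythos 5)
Your proof of part (1) is correct and is essentially identical to the paper's: show $f_i/2\notin S$ because $f_i\notin T$, then for $s\in S\setminus\{0\}$ use $2s\in T\setminus\{0\}$ and $f_i+2s\in T$ to conclude $f_i/2+s\in S$. For part (2) the paper simply cites Lemma 6.9 of Rosales--Garc\'ia-S\'anchez, whereas you give a short self-contained verification ($f_t/2\notin S$ gives one inequality, and $2n>f(T)$ for $n>f_t/2$ gives the other); your argument is correct and makes the lemma independent of that reference, at the modest cost of implicitly using the standard fact that $f_t=\max\mathrm{PF}(T)=f(T)$.
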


\begin{proof}
{\rm (1)} If $f_i$ is even then $\frac{f_i}{2} \in \mathbb{Z}
\setminus S$, since $f_i \notin T$. Let $s$ be a positive element
of $S$, then $2s \in T$ and $f_i+2s \in T$,
since $f_i \in {\rm PF}(T)$; hence $\frac{f_i}{2}+s \in S$ and then $\frac{f_i}{2} \in {\rm PF}(S)$. \\
{\rm (2)} See \cite[Lemma 6.9]{RG}.
\end{proof}

In \cite{R} is proved that one half of a pseudo-symmetric
numerical semigroup is symmetric or pseudo-symmetric; we also know
that these classes consist of the almost symmetric semigroup with
type $1$ or $2$, respectively. In the next theorem this result is
generalized for any almost symmetric numerical semigroup with even
type.

\begin{thm}
If $T$ is almost symmetric with even Frobenius number, then
$S:=\frac{T}{2}$ is almost symmetric and its type is exactly the
number of even pseudo-Frobenius numbers of $T$.
\end{thm}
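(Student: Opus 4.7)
The plan is to prove the two claims in order, leaning on Lemma~\ref{almost symmetric} applied to both $T$ and $S$ and on Lemma~\ref{1} as a bridge between pseudo-Frobenius numbers. Write $f := f(T)$; since $f$ is even and $f \in \text{PF}(T)$, Lemma~\ref{1}(2) yields $f(S) = f/2$.

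For the claim that $S$ is almost symmetric I verify the criterion of Lemma~\ref{almost symmetric}. The nontrivial direction is: for $s \in \mathbb{Z} \setminus \{0\}$ with $s \notin S$, show $f(S) - s \in S \cup \text{PF}(S)$. From $s \notin S$ one has $2s \notin T$ and $2s \neq 0$, so Lemma~\ref{almost symmetric} applied to $T$ gives $f - 2s = 2(f(S) - s) \in T \cup \text{PF}(T)$. If $2(f(S) - s) \in T$ then by definition of $S$ we have $f(S) - s \in S$; otherwise $2(f(S) - s)$ is an even pseudo-Frobenius number of $T$ and Lemma~\ref{1}(1) places $f(S) - s$ in $\text{PF}(S)$.

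For the type equality, Lemma~\ref{1}(1) already supplies an injection $f_i \mapsto f_i/2$ from the even pseudo-Frobenius numbers of $T$ into $\text{PF}(S)$. I will produce the reverse inequality by showing that for every $g \in \text{PF}(S)$ the doubled element $2g$ lies in $\text{PF}(T)$. Clearly $2g \notin T$, and for even $u = 2s \in T \setminus \{0\}$ the identity $2g + u = 2(g + s)$ together with $g + s \in S$ gives $2g + u \in T$, so the only worry is odd $u$. The case $g = f(S)$ is trivial since then $2g = f \in \text{PF}(T)$; hence assume $g < f(S)$. Set $g' := f(S) - g$. By the first part of the theorem combined with Theorem~\ref{Nari} applied to $S$, one has $g' \in \text{PF}(S)$, in particular $2g' \notin T$.

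Now suppose for contradiction that $2g + u \notin T$ for some odd $u \in T \setminus \{0\}$. Since $2g + u > 0$, Lemma~\ref{almost symmetric} applied to $T$ gives $w := f - 2g - u \in T \cup \text{PF}(T)$. Whether $w \in T$ or $w \in \text{PF}(T)$, one obtains $w + u \in T$: from $T + T \subseteq T$ in the first case and from the defining property of pseudo-Frobenius numbers (using $u \in T \setminus \{0\}$) in the second. But $w + u = f - 2g = 2g'$, contradicting $2g' \notin T$. Hence $2g \in \text{PF}(T)$, and combined with Lemma~\ref{1}(1) the two maps are inverse bijections, so $t(S)$ equals the number of even pseudo-Frobenius numbers of $T$. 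The technical heart of the argument is handling odd $u$: using $g \in \text{PF}(S)$ alone gives only closure under adding even elements of $T$, and it is the coordinated use of almost symmetry on both sides, $T$ producing the auxiliary $w$ and $S$ producing the partner $g'$, that yields the clean contradiction $2g' \in T$.
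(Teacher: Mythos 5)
Your proof is correct. It relies on the same ingredients as the paper's argument --- Lemma~\ref{almost symmetric} applied to $T$, Lemma~\ref{1}, and Theorem~\ref{Nari} --- but assembles them in a genuinely different order. The paper applies the dichotomy of Lemma~\ref{almost symmetric} to $2f_i$ for $f_i\in{\rm PF}(S)=\{f_1<\dots<f_t\}$, obtaining $2(f_t-f_i)\in T\cup{\rm PF}(T)$, and rules out membership in $T$ for $i<t$ because it would force $f_t\in S$; this produces $t$ even pseudo-Frobenius numbers of $T$ in one stroke, without ever verifying the defining property of a pseudo-Frobenius number element by element, after which Lemma~\ref{1}(1) closes the count and the almost symmetry of $S$ falls out of Theorem~\ref{Nari}. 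You reverse the order: you first prove that $S$ is almost symmetric by transferring the criterion of Lemma~\ref{almost symmetric} from $T$ to $S$ via doubling (making explicit what the paper dismisses as ``straightforward to check''), and then show by hand that $2g\in{\rm PF}(T)$ for each $g\in{\rm PF}(S)$. The cost is the odd-$u$ case, which you settle with the auxiliary element $w=f(T)-2g-u$ and the Nari partner $g'=f(S)-g$; in effect you re-derive for $2g$ what the paper's dichotomy yields directly for the partner's double $2g'=2(f_t-f_i)$, and since partnering is an involution on ${\rm PF}(S)$ the two formulations are equivalent. Your route is longer on the type count but more self-contained on the almost symmetry of $S$; both are valid.
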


\begin{proof}
Let PF$(S)=\{f_1 < \dots < f_t\}$ and $i \in \{1, \dots, t\}$.
Since $f_i \notin S$, $2f_i \notin T$ and then, thanks to Lemma
\ref{almost symmetric} and to Lemma \ref{1},
$2(f_t-f_i)=2f_t-2f_i=f(T)-2f_i \in T \cup {\rm PF}(T)$.

If $2(f_t-f_i) \in T$, then $s=f_t-f_i \in S$ and therefore
$f_t=f_i+s$. If $s \neq 0$, then $f_t \in S$, since $f_i \in {\rm
PF}(S)$; hence $s=0$, that is $f_t=f_i$.

Consequently if $i \in \{ 1, \dots, t-1\}$, $2f_{t-1} = 2(f_t-f_i) \in 
{\rm PF}(T)$. In this way we obtain $t-1$ even pseudo-Frobenius number;
therefore, since $2f_t(S)$ is not included in this list, there are
at least $t$ even pseudo-Frobenius numbers in $T$, and, by the
previous lemma, they are exactly $t$.

Finally using Theorem \ref{Nari} it is straightforward to check that $S$ is almost symmetric.
\end{proof}

As in the previous section, starting with a numerical semigroup
$S$, we can construct all almost symmetric numerical semigroups
$T$ with even type, such that $S=\frac{T}{2}$; for this aim we use
numerical duplication again.

In \cite{DS} this is not possible, because the Frobenius number of
the numerical duplication with respect to a proper ideal is always
odd.

As in the previous section, $S$ will be a numerical semigroup and we set $f:=f(S), M:=M(S)$ and $K:=K(S)$.

Let us start with some lemmas. The first one was proved by
J\"ager; for the proof see \cite[Hilfssatz 5]{J}.

\begin{lem} \label{Jager}
For any relative ideal $E$, $K-E=\{x \in \mathbb Z \ | \ f-x \notin E\}$.
\end{lem}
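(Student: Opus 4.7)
The plan is to unwind both sides straight from the definitions and change variable $y := f - x$, so that the claim becomes a clean statement about how the relative ideal $E$ absorbs $S$. First I would write
$$x \in K - E \iff x + e \in K \text{ for every } e \in E,$$
and then apply the definition $K = \{z \in \mathbb Z \mid f - z \notin S\}$ to each $x + e$. This converts the condition to: $(f - x) - e \notin S$ for every $e \in E$. Setting $y = f - x$, the task reduces to proving
$$y \notin E \iff y - e \notin S \text{ for every } e \in E.$$

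For the implication $(\Leftarrow)$, I would argue by contrapositive: if $y \in E$, then taking $e = y$ gives $y - e = 0 \in S$, producing a witness that violates the right-hand side. For the implication $(\Rightarrow)$, suppose $y - e = s \in S$ for some $e \in E$; then $y = s + e \in S + E$. Here I use the defining property of a relative ideal, $S + E \subseteq E$, to conclude $y \in E$. Contrapositive gives the desired direction.

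There is no real obstacle: the whole argument is a definitional chase plus the single use of $S + E \subseteq E$ (and the convenient fact that $0 \in S$, which supplies the witness in the easy direction). I would just need to be careful to quote the definition of $K$ from the preliminaries and to note that these manipulations are purely set-theoretic, so no hypothesis on $E$ beyond being a relative ideal is needed.
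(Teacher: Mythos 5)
Your proof is correct and complete. Note that the paper itself does not prove this lemma at all --- it simply cites J\"ager's original article (Hilfssatz 5 of \cite{J}) --- so there is no in-paper argument to compare against; your definitional chase, using only $0\in S$ for one direction and $E+S\subseteq E$ for the other, is exactly the standard short proof and fills in what the paper outsources to the reference.
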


\begin{lem}
Suppose that $S$ is almost symmetric and $E$ a relative ideal such that $E+E+b \subseteq S$. 
Assume that $2f \geq 2f(E)+b$. Then the following conditions are equivalent: \\
{\rm (1)} ${\rm PF}(S) \subseteq E-E$. \\
{\rm (2)} $M-M \subseteq E-E$. \\
{\rm (3)} $K \subseteq E-E$.
\end{lem}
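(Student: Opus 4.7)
The plan is to prove the chain $(1)\Leftrightarrow(2)\Leftrightarrow(3)$, using the structural facts from the preliminaries: the identity $M-M=S\cup\mathrm{PF}(S)$, the automatic inclusion $S\subseteq E-E$ (which comes directly from $S+E\subseteq E$), the almost-symmetric characterization $M+K\subseteq M$ (i.e.\ $K\subseteq M-M$), and Nari's Theorem~\ref{Nari}. Since $S$ already sits in $E-E$ and $M-M=S\cup\mathrm{PF}(S)$, the equivalence $(1)\Leftrightarrow(2)$ is immediate. The almost-symmetric hypothesis gives $K\subseteq M-M$, so $(2)\Rightarrow(3)$.

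For the direction $(3)\Rightarrow(2)$, I would first use Nari's Theorem to locate most of $\mathrm{PF}(S)$ inside $K$: writing $\mathrm{PF}(S)=\{f_1<\cdots<f_{t-1}<f\}$, we have $f-f_i=f_{t-i}\in\mathrm{PF}(S)\subseteq\mathbb{Z}\setminus S$ for every $i<t$, so $f_i\in K$. Together with $S\subseteq K$ this gives $(M-M)\setminus\{f\}\subseteq K$. Hence, assuming $K\subseteq E-E$, the only element of $M-M$ still to be placed in $E-E$ is the Frobenius number $f$ itself.

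The remaining claim---that $f\in E-E$---is where the two standing hypotheses $E+E+b\subseteq S$ and $2f\geq 2f(E)+b$ must finally be used, and it is the step I expect to be the main obstacle. Fixing $e\in E$, the inclusion $E+E+b\subseteq S\subseteq\mathbb{N}$ applied to the pair $(e,e)$ yields $2e+b\geq 0$; since $b$ is odd this sharpens to $e\geq -(b-1)/2$. On the other hand, $2f\geq 2f(E)+b$ has an even left side and an odd right side, so it in fact forces $2f\geq 2f(E)+b+1$, i.e.\ $f-f(E)\geq (b+1)/2$. Summing the two bounds gives $f+e\geq f(E)+1$, so $f+e$ lies strictly beyond $f(E)$ and hence in the cofinite tail of $E$. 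This proves $f\in E-E$ and completes the equivalence.
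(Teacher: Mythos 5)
Your proof is correct and follows essentially the same route as the paper: both arguments reduce everything to the single nontrivial fact that $f \in E-E$, which you and the paper derive from the same parity/positivity estimates on $2e+b$ and on $2f-2f(E)-b$ (the paper phrases it as a contradiction, you phrase it directly). The only cosmetic difference is that you invoke Nari's theorem to get $(M-M)\setminus\{f\}\subseteq K$, whereas the paper uses the identity $M-M=K\cup\{f\}$ coming from ${\rm PF}(S)={\rm L}(S)\cup\{f\}$.
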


\begin{proof}
First we claim that $f \in E-E$. Suppose that there exists $e \in E$ such that $f+e \notin E$. 
We have $2e+b>0$, since it is odd and $E+E+b \subseteq S$. Therefore $2f < 2(f+e)+b \leq 2f(E)+b$. 
Hence we get a contradiction, since $2f(E)+b < 2f$.

Now, by definition of almost symmetric semigroups, we have:
$$
M-M=S \cup {\rm PF}(S)=S \cup {\rm L}(S)\cup \{f\}=K \cup \{f\}.
$$
Moreover, $S \subseteq E-E$ because $E$ is an ideal, then, since $f \in E-E$, we obtain:
$$
{\rm PF}(S) \subseteq E-E \Longleftrightarrow S \cup {\rm PF}(S) \subseteq E-E \Longleftrightarrow
$$
$$
\Longleftrightarrow M-M \subseteq E-E \Longleftrightarrow K \cup \{f\} \subseteq E-E \Longleftrightarrow K \subseteq E-E.
$$
\end{proof}

\begin{lem} \label{K-E}
Let $S$ be almost symmetric and $E$ a relative ideal. If the equivalent conditions of the previous lemma hold, then $M-E = K-E$.
\end{lem}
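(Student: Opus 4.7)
The plan is to describe both $K - E$ and $M - E$ as subsets of $\mathbb{Z}$ cut out by non-membership conditions, and then collapse the two descriptions using the hypothesis $K \subseteq E - E$ together with the fact $f \in E - E$ (already established at the very start of the preceding lemma's proof). J\"ager's formula (Lemma~\ref{Jager}) immediately handles one side: $K - E = \{x \in \mathbb{Z} \mid f - x \notin E\}$.

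For $M - E$ I would first record the ``dual'' description of $M$ afforded by $S$ being almost symmetric. Since ${\rm L}(S) = {\rm PF}(S) \setminus \{f\}$ in that case, one has $S \cup {\rm PF}(S) = K \cup \{f\}$; combining this with Lemma~\ref{almost symmetric} gives $M = \{s \in \mathbb{Z} \setminus \{0\} \mid f - s \notin K\}$. Expanding ``$x + e \in M$ for every $e \in E$'' into its two halves and substituting $y = f - x$ in the second yields
$$
M - E = \{x \in \mathbb{Z} \mid -x \notin E\} \cap \{x \in \mathbb{Z} \mid f - x \notin K + E\}.
$$

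Now I would feed in the two identities $K + E = E$ and $f + E \subseteq E$. The first follows from $K \subseteq E - E$ (giving $K + E \subseteq E$) together with $0 \in K$ (giving the reverse inclusion); the second is exactly the opening claim of the preceding lemma's proof. The second factor above then becomes $\{x \mid f - x \notin E\} = K - E$. For the first factor, if $-x \in E$ then $f - x = f + (-x) \in f + E \subseteq E$, contradicting $x \in K - E$; hence $K - E \subseteq \{x \mid -x \notin E\}$, and the intersection collapses to $K - E$ itself.

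The only real obstacle is the rewriting of $M$: one must remember that $x + e \in M$ encodes both $x + e \in S$ and $x + e \neq 0$, and keep the two conditions separate long enough to see that, after the substitution $y = f - x$, one is neutralised by $K + E = E$ and the other by $f + E \subseteq E$. Once this dictionary is in place, the argument reduces to a one-line chase and does not require any further appeal to $E + E + b \subseteq S$ beyond what already entered the preceding lemma.
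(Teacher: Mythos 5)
Your proof is correct, but it is organized quite differently from the paper's. The paper argues by contradiction on a single element: starting from $x\in(K-E)\setminus(M-E)$ it produces $e\in E$ with $x+e\in K\setminus M$, uses almost symmetry to place $f-x-e$ in ${\rm L}(S)\cup\{f\}={\rm PF}(S)\subseteq E-E$, and derives the absurdity $f\in K$. You instead compute both sides globally: J\"ager's formula for $K-E$, the dual description $M=\{s\in\mathbb{Z}\setminus\{0\}\mid f-s\notin K\}$ (valid because $S\cup{\rm PF}(S)=K\cup\{f\}$ when $S$ is almost symmetric), and the absorption identities $K+E=E$ and $f+E\subseteq E$. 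The two arguments rest on exactly the same duality $s\mapsto f-s$ and on $E$ absorbing ${\rm PF}(S)$, so they are close cousins, but yours has the merit of making both inclusions fall out of one set identity, of handling the $x+e=0$ case transparently through the factor $\{x\mid -x\notin E\}$, and of isolating where the hypothesis $K\subseteq E-E$ actually enters (namely in $K+E=E$). One small improvement: rather than importing $f\in E-E$ from the opening of the preceding lemma's proof (which relied on $E+E+b\subseteq S$ and $2f\geq 2f(E)+b$, hypotheses not restated in the present lemma), note that $f\in{\rm PF}(S)\subseteq E-E$ is itself one of the ``equivalent conditions'', so $f+E\subseteq E$ is available with no appeal to those extra assumptions.
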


\begin{proof}
Since $M \subseteq K$, one has $M-E \subseteq K-E$. Suppose that the equality does not hold,
i.e. there exists $x \in (K-E)\setminus (M-E)$; then there exists $e \in E$ such that $x+e \in K \setminus M$. Since $x+e \in K$, one has $f-x-e \notin S$ and then 
$f-x-e \in {\rm L}(S) \cup \{f\} = {\rm PF}(S) \subseteq E-E$ by assumptions. 
Hence $f-x=(f-x-e)+e \in E$ and, since $x \in K-E$, we obtain $f=(f-x)+x \in K$ that is a contradiction.
\end{proof}

\begin{thm} \label{main even}
Let $S$ be a numerical semigroup, let $b\in S$ be an odd integer, and let $E$ be a relative ideal of $S$
such that $E+E+b \subseteq S$ and $2f > 2f(E)+b$. 
Then the numerical duplication $T:=\du$ is almost symmetric (with even type) if and only if the following properties hold: \\
{\bf (i)}  $S$ is almost symmetric; \\
{\bf (ii)} $M-E \subseteq (E-M)+b$; \\
{\bf (iii)} $K \subseteq E-E$.
\end{thm}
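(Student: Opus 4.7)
The plan is to apply the characterization that a numerical semigroup $U$ is almost symmetric if and only if $M(U)+K(U)\subseteq M(U)$ (see \cite[Proposition 4]{BF}) to $U=T$. Since $2f>2f(E)+b$ forces $f(T)=2f$, the ``even type'' conclusion will follow automatically from Proposition \ref{frobenius odd} once $T$ is shown to be almost symmetric. The first step is to describe $K(T)$ explicitly: using the parity splitting of the standard canonical ideal of $\du$ recalled just before Lemma \ref{2}, and applying Lemma \ref{Jager} to rewrite the odd branch, one obtains
\[
K(T) \;=\; 2 \cdot K \;\cup\; \bigl(2 \cdot (K-E) - b\bigr),
\]
while $M(T) = 2 \cdot M \cup (2 \cdot E + b)$ is immediate from the definition of the numerical duplication.

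Next, mirroring the four-case analysis of Theorem \ref{main odd}, I would split $M(T)+K(T)\subseteq M(T)$ according to the parities of the two summands. The even-even case $2s+2x$ (with $s\in M$, $x\in K$) is forced into $2\cdot M$ and yields $M+K\subseteq M$, i.e.\ condition \textbf{(i)}. The odd-even case $(2e+b)+2x$ is forced into $2\cdot E+b$ and yields $E+K\subseteq E$, i.e.\ $K\subseteq E-E$, which is \textbf{(iii)}. The even-odd case $2s+(2y-b)$ (with $y\in K-E$) is odd and must lie in $2\cdot E+b$, forcing $M+(K-E)\subseteq E+b$, equivalently $K-E\subseteq (E-M)+b$. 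Finally, the odd-odd case $(2e+b)+(2y-b)$ is even and must lie in $2\cdot M$, forcing $E+(K-E)\subseteq M$.

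For the forward implication, the even-even and odd-even cases immediately give \textbf{(i)} and \textbf{(iii)}. With these in hand, the hypotheses of the lemma preceding Lemma \ref{K-E} are met (its condition $2f\geq 2f(E)+b$ holds strictly here), so Lemma \ref{K-E} delivers the crucial identification $M-E=K-E$; the even-odd case then rewrites as $M-E\subseteq (E-M)+b$, which is \textbf{(ii)}. Conversely, assuming all three conditions, Lemma \ref{K-E} again gives $M-E=K-E$, so the first three cases correspond exactly to \textbf{(i)}, \textbf{(ii)}, \textbf{(iii)}; the odd-odd case then collapses to the tautology $E+(K-E)=E+(M-E)\subseteq M$, which is just the defining property of $M-E$. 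This last step is the main obstacle: the odd-odd condition is not among the stated hypotheses, and only through the identification $K-E=M-E$ supplied by \textbf{(i)} and \textbf{(iii)} does it reduce to a triviality. Once $T$ is proved almost symmetric, Proposition \ref{frobenius odd} together with $f(T)=2f$ being even finishes the ``even type'' assertion.
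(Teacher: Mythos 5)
Your proof is correct and follows essentially the same route as the paper: the same explicit description of $K(\du)$ split by parity (with Lemma \ref{Jager} handling the odd branch), the same four-case decomposition of $M(T)+K(T)\subseteq M(T)$ matching cases to conditions \textbf{(i)}, \textbf{(iii)}, \textbf{(ii)}, and the same observation that the remaining odd--odd case reduces to a tautology once Lemma \ref{K-E} gives $M-E=K-E$ from \textbf{(i)} and \textbf{(iii)}. The only difference is cosmetic: you package $K(T)$ as the set identity $2\cdot K\cup(2\cdot(K-E)-b)$ rather than parametrizing by $a$ as the paper does.
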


\begin{proof}
$T$ is almost symmetric if and only if $M(T)+K(T) \subseteq K(T)$ and, recalling the characterization 
of $K(T)$, given before Lemma \ref{2}, this is equivalent to the following four conditions: \\\\
{\bf (i)} $2m+2f-a \in M(T)$ for any $m \in M$ and $a$ even such that $\frac{a}{2} \notin S$; \\
{\bf (ii)} $2m+2f-a \in M(T)$ for any $m \in M$ and $a$ odd such that $\frac{a-b}{2} \notin E$; \\
{\bf (iii)} $2e+b+2f-a \in M(T)$ for any $e \in E$ and $a$ even such that $\frac{a}{2} \notin S$; \\
{\bf (iv)} $2e+b+2f-a \in M(T)$ for any $e \in E$ and $a$ odd such that $\frac{a-b}{2} \notin E$. \\\\
Discussing every condition, we will see that {\bf (i), (ii), (iii)} are equivalent to the properties listed in the statement, while condition {\bf (iv)} is always true, if we assume {\bf (i)} and {\bf (iii)}. The thesis follows immediately from these facts. \\\\
{\bf (i)} We have $2m+2f-a \in M(T)$ if and only if $m+f-\frac{a}{2} \in M$, that is $f-\frac{a}{2} \in M-M=S \cup {\rm PF}(S)$, for any $\frac{a}{2} \notin S$. Thanks to Lemma \ref{almost symmetric}, it is equivalent to say that $S$ is almost symmetric. \\
{\bf (ii)} In this case $2m+2f-a \in M(T)$ if and only if $\frac{2m+2f-a-b}{2} \in E$, that is $m+f-\frac{a-b}{2}-b \in E$. This is equivalent to $f-x \in (E-M)+b$, for any $x \notin E$ and then, applying Lemma \ref{Jager} and Lemma \ref{K-E} we obtain $M-E \subseteq (E-M)+b$. \\
{\bf (iii)} The property $2e+b+2f-a \in M(T)$ is equivalent to $e+f-\frac{a}{2} \in E$, 
i.e. $f-\frac{a}{2} \in E-E$. Recalling the definition of $K$, it is equivalent to $K \subseteq E-E$. \\
{\bf (iv)} Finally, $2e+b+2f-a \in M(T)$ if and only if $e+f-\frac{a-b}{2} \in M$, i.e. $f-x \in M-E$ for any 
$x \notin E$. Using Lemma \ref{Jager} it is equivalent to say 
$K-E \subseteq M-E$ and, by Lemma \ref{K-E}, if we assume {\bf (i)} and {\bf (iii)} this fact is always true.

\end{proof}

Combining the previous theorem with Proposition \ref{duplication} and Proposition
\ref{frobenius odd}, we obtain the following corollary.

\begin{cor} \label{coro}
Let $T$ be a numerical semigroup and $S=\frac{T}{2}$. Then $T$ is almost symmetric semigroup with even type if and only if $S$ is almost symmetric and there exist an odd integer $b \in S$ and a relative ideal $E$ of $S$ such that \\\\
{\rm (1)} $T= \du$; \\
{\rm (2)} $M(S)-E \subseteq (E-M(S))+b$; \\
{\rm (3)} $K(S) \subseteq E-E$; \\
{\rm (4)} $2f(S) > 2f(E)+b$.
\end{cor}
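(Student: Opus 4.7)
The plan is to assemble three results that are already in place: Theorem~\ref{main even} carries the entire weight of the characterization, so the corollary is essentially a packaging step. Proposition~\ref{duplication} is used to guarantee that every $T$ with $S=T/2$ can be written as a numerical duplication, and Proposition~\ref{frobenius odd} is used to trade ``even type'' for ``even Frobenius number'' in both directions.

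For the forward implication, suppose $T$ is almost symmetric with even type. By Proposition~\ref{frobenius odd}, $f(T)$ is even. Proposition~\ref{duplication} furnishes an odd $b\in S$ and a relative ideal $E=(T-b)/2$ of $S$ with $E+E+b\subseteq S$ and $T=\du$, giving condition~(1). Since $f(\du)=\max(2f(S),\,2f(E)+b)$ and $2f(E)+b$ is odd while $f(T)$ is even, forcing $f(T)=2f(S)$, we obtain $2f(S)>2f(E)+b$, which is condition~(4). Under this strict inequality the hypothesis of Theorem~\ref{main even} is met, so conditions (i)--(iii) of that theorem hold; but these are exactly ``$S$ is almost symmetric'', $M(S)-E\subseteq (E-M(S))+b$, and $K(S)\subseteq E-E$, i.e.\ the required properties (2) and~(3) (and the almost symmetry of $S$).

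For the converse, suppose $S$ is almost symmetric and $b\in S$ odd, $E$ a relative ideal satisfy (1)--(4). The mere assertion that $T=\du$ is a numerical semigroup forces $E+E+b\subseteq S$, which is built into the definition of the numerical duplication. Condition~(4) is then precisely the hypothesis $2f>2f(E)+b$ of Theorem~\ref{main even}, and the three conditions (i)--(iii) of that theorem are supplied by the assumed almost symmetry of $S$ together with (2) and~(3). Thus $T$ is almost symmetric. Finally, $f(T)=2f(S)$ by~(4) is even, so Proposition~\ref{frobenius odd} guarantees that the type of $T$ is even.

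There is no genuine obstacle in the proof; the one step that is not quite bookkeeping is the parity observation that, since $f(T)$ must be even and $2f(E)+b$ is necessarily odd, condition~(4) automatically follows from Proposition~\ref{duplication}. This is the linchpin that permits Theorem~\ref{main even} to be invoked in both directions, and once it is in place the corollary reduces to a direct translation.
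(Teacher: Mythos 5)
Your proof is correct and follows exactly the route the paper intends: the paper gives no written proof beyond ``combining Theorem~\ref{main even} with Propositions~\ref{duplication} and~\ref{frobenius odd},'' and you have filled in precisely those details, including the key parity observation that $2f(E)+b$ is odd while $f(T)$ is even, which forces condition~(4) in the forward direction.
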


We denote by $m(E)$ the smallest integer of $E$.

\begin{lem} \label{0}
Let $S$ be a numerical semigroup, $b \in S$ odd and $E$ a relative ideal of $S$ such that $E+E+b \subseteq S$. Then there exist $b'$ and $E'$ such that $\du=S \! \Join^{b'} \! E'$ and the smallest element of $E'$ is zero.
\end{lem}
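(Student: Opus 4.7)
The plan is to translate $E$ so that its smallest element becomes $0$, and compensate by translating $b$ in the opposite direction. Concretely, let $m := m(E)$ (which exists because any relative ideal of $S$ is bounded below by some $-x$ with $x \in S$), and set
\[
E' := E - m, \qquad b' := b + 2m.
\]
By construction $0 \in E'$ is its minimum, so it remains only to verify that $b'$ and $E'$ satisfy the hypotheses required for $S \!\Join^{b'}\! E'$ to make sense and to coincide with $\du$.

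First I would check that $b'$ is an odd element of $S$: oddness is immediate, and since $E+E+b \subseteq S$ and $m \in E$, we have $b' = b+m+m \in S$. Next I would check that $E'$ is a relative ideal of $S$. The condition $E'+S \subseteq E'$ follows from $E+S \subseteq E$ by subtracting $m$. For the condition ``$x' + E' \subseteq S$ for some $x' \in S$'', start from an $x \in S$ with $x + E \subseteq S$, given by $E$ being a relative ideal; upon replacing $x$ by $x+s$ for $s \in S$ large enough (this still satisfies $(x+s)+E \subseteq S$ because $E+S \subseteq E$), we may assume $x + m \geq f(S) + 1$, hence $x' := x+m \in S$, and then $x' + E' = x + E \subseteq S$. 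Third, the compatibility $E' + E' + b' \subseteq S$ is a one-line computation:
\[
E' + E' + b' = (E-m) + (E-m) + (b+2m) = E + E + b \subseteq S.
\]

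Finally, the identification $S \!\Join^{b'}\! E' = \du$ reduces to showing $2\cdot E' + b' = 2\cdot E + b$, which is again immediate:
\[
2\cdot E' + b' = 2(E-m) + (b+2m) = 2\cdot E + b,
\]
so that $S \!\Join^{b'}\! E' = 2\cdot S \cup (2\cdot E' + b') = 2\cdot S \cup (2\cdot E + b) = \du$.

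There is no serious obstacle here; the only mildly delicate point is verifying the ``$x' + E' \subseteq S$ for some $x' \in S$'' clause of the definition of a relative ideal, which can be arranged by replacing the initial witness $x$ by $x + s$ for a sufficiently large $s \in S$ so that $x + m$ lies in $S$ even when $m < 0$.
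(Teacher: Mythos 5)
Your proposal is correct and follows exactly the paper's own argument: set $E' := E - m(E)$ and $b' := b + 2m(E)$, observe that $b' \in E+E+b \subseteq S$, that $E'+E'+b' = E+E+b \subseteq S$, and that $2\cdot E'+b' = 2\cdot E+b$, so the two duplications coincide. Your extra care in checking that $E'$ is still a relative ideal of $S$ is a harmless (and correct) elaboration of a point the paper leaves implicit.
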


\begin{proof}
Set $E':=E-m(E)$ and $b':=b+2m(E)$. Clearly $m(E')=0$ and $E'+E'+b'=E-m(E)+E-m(E)+b+2m(E)=E+E+b \subseteq S$; 
moreover, we have $b' \in E+E+b \subseteq S$ and, if $e \in E$, then $2e+b=2(e-m(E))+b+2m(E) \in 2E'+b'$ and vice versa; hence we have the thesis.
\end{proof}

\begin{rem}
Let $S$ be an almost symmetric numerical semigroup; we want to know which almost symmetric semigroups $T$ with even type satisfy $S=\frac{T}{2}$. Assume that the smallest element of $E$ is zero. According to Corollary 
\ref{coro}, one has $2f(S) > 2f(E)+b$, then $b<2f(S)-2f(E)\leq 2f(S)+2$. Hence we have a finite number of possibilities for $b$; moreover, if we fixed $b$, we have $-1 \leq f(E)<f(S)-\frac{b}{2}$ and then there are finitely many choices for $E$.

This fact is obvious, since $f(T)=2f(S)$ and there is a finite number of 
semigroups with fixed Frobenius number; however this remark is useful for the next example.
\end{rem}

\begin{ex}
Consider the pseudo-symmetric semigroup $S=\{0,3,5 \rightarrow \}$; 
we want to construct all almost symmetric semigroups $T$ with even type
such that $S=\frac{T}{2}$.

As in the previous remark we have $b<10$ and, 
for a fixed $b$, $-1 \leq f(E)<4-\frac{b}{2}$. 
In view of Lemma \ref{0} we are looking for only ideals containing zero and 
then $f(E)$ is different from zero. We have four possibilities:

$$
\begin{array}{ll}
b=3 \ \ \ \ \  \Longrightarrow \ \ \ \ \  f(E)=-1,1,2. \\
b=5 \ \ \ \ \  \Longrightarrow \ \ \ \ \  f(E)=-1,1. \\
b=7 \ \ \ \ \  \Longrightarrow \ \ \ \ \  f(E)=-1. \\
b=9 \ \ \ \ \  \Longrightarrow \ \ \ \ \  f(E)=-1.
\end{array}
$$

The unique ideals with Frobenius number $-1$ and $1$ are, respectively, $E_1=\mathbb{N}$ and $E_2=\{0,2 \rightarrow \}$, while there are two ideals with Frobenius number 2, 
$E_3=\{0,3 \rightarrow \}$ and $E_4=\{0,1,3 \rightarrow\}$. 
Note that, if $b=3$, $E_1$ and $E_4$ are not acceptable, because, in this case, $E+E+b \nsubseteq S$. 
It is straightforward to check that $E_i-E_i=E_i$ for $i=1,2,3$ and then $K=\{0,2,3,5 \rightarrow \}$ is contained in $E_i-E_i$ for $i=1,2$ but not for $i=3$. Finally we have
$$
\begin{array}{ll}
M-E_1=\{5 \rightarrow\},\ \ \ \ \ &E_1-M=\{-3 \rightarrow\}, \\
M-E_2=\{3,5 \rightarrow \}, &E_2-M=\{-3,-1 \rightarrow \}
\end{array}
$$
and then we obtain:
$$
\begin{array}{ll}
b=3\ \ \ \ \ \ \ \ \ \ &M-E_2 \subseteq (E_2-M)+b, \\
b=5 &M-E_1 \subseteq (E_1-M)+b, \\
&M-E_2 \nsubseteq (E_2-M)+b, \\
b=7 &M-E_1 \subseteq (E_1-M)+b, \\
b=9 &M-E_1 \nsubseteq (E_1-M)+b.
\end{array}
$$
Hence we have three possibilities and they give the numerical semigroups
$$
\begin{array}{ll}
S \! \Join^3 \! E_2=\{0,3,6,7,9 \rightarrow\}, \\
S \! \Join^5 \! E_1=\{0,5,6,7,9 \rightarrow\}, \\
S \! \Join^7 \! E_1=\{0,6,7,9 \rightarrow\}.
\end{array}
$$
Note that the first two are pseudo-symmetric, while the last one is almost symmetric with type four.
\end{ex}

\begin{lem}
For any almost symmetric numerical semigroup $S$ different from $\mathbb{N}$, 
there exists at least one relative ideal $E$ and one odd integer $b \in S$ 
such that $\du$ is almost symmetric with even type.
\end{lem}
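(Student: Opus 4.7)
The plan is to exhibit a universal choice of relative ideal $E$ and odd integer $b \in S$ that works for every almost symmetric $S \neq \mathbb{N}$, and then apply Theorem~\ref{main even}. Specifically, I would take $E = \mathbb{N}$ and let $b$ be the unique odd integer in $\{f+1, f+2\}$, where $f := f(S)$; since $S \neq \mathbb{N}$ forces $f \geq 1$, such a $b$ is well-defined, and it lies in $S$ automatically because $b \geq f+1$. Moreover, $\mathbb{N}$ is a relative ideal of $S$: it is closed under addition by $S$, and $(f+1) + \mathbb{N} \subseteq S$.

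Next I would check the standing hypotheses and the three conditions of Theorem~\ref{main even}. Since $b \geq f+1$, we have $E + E + b = \mathbb{N} + b \subseteq S$; and since $f(E) = -1$ with $b \leq f+2 \leq 2f+1$ (using $f \geq 1$), the strict inequality $2f(S) > 2f(E) + b = b - 2$ holds. Condition~(i) is given. For~(ii), a direct computation yields $M(S) - \mathbb{N} = \{f+1, f+2, \ldots\}$ and $\mathbb{N} - M(S) = \{-\mu, -\mu+1, \ldots\}$, where $\mu$ denotes the multiplicity of $S$, so the inclusion $M(S) - E \subseteq (E - M(S)) + b$ reduces to $b \leq f + 1 + \mu$, which follows from $b \leq f+2$ and $\mu \geq 2$ (since $S \neq \mathbb{N}$). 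For~(iii), any negative integer $x$ satisfies $f - x > f$, hence $f - x \in S$ and $x \notin K(S)$; thus $K(S) \subseteq \mathbb{N} = \mathbb{N} - \mathbb{N} = E - E$.

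By Theorem~\ref{main even}, $\du$ is then almost symmetric with even type. The argument encounters no real obstacle: the key observation is that the choice $E = \mathbb{N}$ makes the condition $K \subseteq E - E$ automatic (because $K(S)$ contains no negative elements), after which the narrow window $\{f+1, f+2\}$ for $b$ simultaneously satisfies the parity, relative-ideal, and size requirements needed to invoke the theorem.
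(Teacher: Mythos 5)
Your proposal is correct and follows essentially the same route as the paper: both take $E=\mathbb{N}$ and $b$ the odd element of $\{f+1,f+2\}$, verify the standing hypotheses $E+E+b\subseteq S$ and $2f>2f(E)+b$, and then check conditions (ii) and (iii) of Theorem~\ref{main even} by the same elementary computations (the paper bounds $m+x\geq f+2\geq b$ directly where you compute $\mathbb{N}-M$ via the multiplicity, but this is only a cosmetic difference).
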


\begin{proof}
We set $E:=\mathbb{N}$ and $b:=f+1$ if it is odd, or otherwise $b:=f+2$.

First of all note that if $e,e' \in E$, one has $e+e'+b >f$, 
then $E+E+b \subseteq S$; moreover $2f(E)+b=-2+b \leq -2 +f+2 < 2f$. 
Then, by Theorem \ref{main even}, we have to prove that $K \subseteq E-E$ 
and $M-E \subseteq (E-M)+b=(E+b)-M$. It is straightforward to check that 
$E-E=\mathbb{N}$ and $M-E=\{f+1 \rightarrow \}$, then clearly 
$K \subseteq E-E$ and if $m \in M$ and $x \in M-E$ one has $m+x \geq f+2$. 
Hence $m+x \in \{f+2, \rightarrow \} \subseteq \{b \rightarrow \}=E+b$ and 
consequently $M-E \subseteq ((E+b)-M)$ as required.
\end{proof}

We have already seen that one half of an almost symmetric numerical semigroup with even type 
is almost symmetric and the previous lemma proves the converse: 
every almost symmetric numerical semigroup is one half of some almost symmetric semigroup with even type. 
Then we can state the last corollary:

\begin{cor} \label{final}
A numerical semigroup different from $\mathbb{N}$ is almost symmetric if and only if it is one half of an almost symmetric numerical semigroup with even type.
\end{cor}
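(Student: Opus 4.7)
The plan is to combine two results already established in this section to obtain the corollary essentially for free. For the implication from right to left, I would appeal to the theorem preceding Corollary~\ref{coro}, which shows directly that if $T$ is almost symmetric with even Frobenius number (equivalently, by Proposition~\ref{frobenius odd}, even type), then $\frac{T}{2}$ is almost symmetric; this handles the forward direction once one observes that $\frac{T}{2}$ cannot equal $\mathbb{N}$ whenever $T$ is a proper numerical semigroup with even Frobenius number $f(T)$, since $\tfrac{f(T)}{2} \notin \frac{T}{2}$.

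For the converse, given an almost symmetric $S \neq \mathbb{N}$, I would simply invoke the lemma immediately preceding this corollary. There the explicit choice $E := \mathbb{N}$ together with $b := f(S)+1$ or $b := f(S)+2$ (whichever is odd) is shown to produce a numerical duplication $T := \du$ that is almost symmetric, and $S = \frac{T}{2}$ by construction. By Proposition~\ref{frobenius odd} this $T$ has even type, which is exactly what we need.

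Since both halves of the biconditional have already been packaged, the only remaining task is bookkeeping. The main (small) subtlety is the exclusion of $\mathbb{N}$: it has to appear on the almost symmetric side of the biconditional because, on the other side, $\frac{T}{2} = \mathbb{N}$ would force $T$ to contain every even integer, contradicting the fact that an almost symmetric semigroup with even type has even Frobenius number $f(T) \notin T$. Thus no additional argument beyond citing the relevant two statements is required; the proof is a one- or two-line combination.
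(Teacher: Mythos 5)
Your proposal is correct and matches the paper's own argument, which likewise obtains the corollary by combining the theorem that one half of an almost symmetric semigroup with even Frobenius number is almost symmetric with the immediately preceding lemma constructing $\du$ for $E=\mathbb{N}$ and a suitable odd $b$. Your extra observation that $\frac{T}{2}\neq\mathbb{N}$ because $\frac{f(T)}{2}$ is a positive integer missing from $\frac{T}{2}$ is the same point the paper makes in the remark following the corollary.
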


Notice that, if $\mathbb{N}$ is one half of a semigroup $T$, 
then $T$ contains all even positive integer. Hence $f(T)$ 
is odd and it is easy to see that $T$ is symmetric.

\medskip

\noindent \textbf{Acknowledgments.} The author would like to thank Marco D'Anna for his help and support 
during the drafting of the paper and Pedro Garc\'ia-S\'anchez for his useful suggestions.

\end{document}